\documentclass[12pt,twoside]{amsart}
\usepackage{amsmath, amsthm, amscd, amsfonts, amssymb, graphicx, color}
\usepackage[bookmarksnumbered, plainpages]{hyperref}

\textwidth 16 cm \textheight 21 cm

\oddsidemargin 1.0cm \evensidemargin 1.0cm

\setcounter{page}{1}

%------------------------------------------------------------------------------------%

\newtheorem{thm}{Theorem}[section]

\newtheorem{lem}{Lemma}[section]
\newtheorem{prop}[thm]{Proposition}
\newtheorem{defn}[thm]{Definition}
\theoremstyle{remark}
\newtheorem{rmk}{Remark}[section]

\newtheorem{ex}{Example}[section]
\numberwithin{equation}{section}

%------------------------------------------------------------------------------------%

\begin{document}

\vskip 1 true cm

\title{\bf Some Inequalities Related to Ricci Curvatures for Lagrangian Submanifolds of K$\ddot{\mathrm{A}}$hler QCH-manifolds}
\author{Liang Zhang$^{\ast}$, Xudong Liu, Dandan Cai}

\thanks{{\scriptsize
\newline $^{\ast}$Corresponding author\\
\hskip -0.4 true cm \textit{2010 Mathematics Subject Classification.} 53C40, 53C15.
\newline \textit{Key words and phrases.} inequalities, Ricci curvatures, Lagrangian submanifolds, K$\ddot{\mathrm{a}}$hler QCH-manifolds
%\newline This paper was supported by the Foundation for Excellent Young Talents of Higher
%Education of Anhui Province (Grant NO.2011SQRL021ZD)
}}

\maketitle

\begin{abstract}
 By establishing two general quadratic inequalities, we obtain some inequalities related to Ricci curvatures for Lagrangian submanifolds of K$\ddot{\mathrm{a}}$hler QCH-manifolds, which generalize some results for Lagrangian submanifolds of complex space forms.
\end{abstract}

\vskip 0.2 true cm

%------------------------------------------------------------------------------------%

\pagestyle{myheadings}
\markboth{\rightline {\scriptsize L. Zhang, X. Liu and D. Cai}}
         {\leftline{\scriptsize Some Inequalities Related to Ricci Curvatures for Lagrangian Submanifolds of K$\ddot{\mathrm{a}}$hler QCH-manifolds}}

\bigskip
\bigskip

%------------------------------------------------------------------------------------%
%------------------------------------------------------------------------------------%

\section{ Introduction}
\vskip 0.4 true cm

One of the most important problems in submanifold theory is to find simple relationships between intrinsic and extrinsic invariants of a submanifold. The main extrinsic invariant is the squared mean curvature and the main intrinsic invariants include the Ricci curvature and the scalar curvature. In 1999, B.-Y.Chen \cite{BYC1} proved the following inequality on the Ricci curvature and the squared mean curvature $\|H\|^2$ for submanifolds of a real space form.

\begin{thm}[\cite{BYC1},Theorem 4]
  Let $\mathbb{R}^m(a)$ be a real space form of constant sectional curvature $a$, $N$ be an $n$-dimensional submanifold of $\mathbb{R}^m(a)$. Then the following statements are true:

  (i) For each unit vector $X\in T_pN$, we have
  \begin{equation}\label{1.1}
    \|H\|^2\geq \frac{4}{n^2}[Ric(X)-(n-1)a].
  \end{equation}

  (ii) If $H(p) = 0$, then a unit vector $X\in T_pM$ satisfies the equality case of (\ref{1.1})
   if and only if $X$ belongs to the relative null space $\mathcal{N}(p)$ given by
   \begin{equation*}
     \mathcal{N}(p)=\{X \in T_pN | h(X,Y)=0, \forall Y \in T_pN\}.
   \end{equation*}

   (iii) The equality case of (\ref{1.1}) holds for all unit vectors $X\in T_pN$ if and only if
   either $p$ is a geodesic point or $n=2$ and $p$ is an umbilical point.
\end{thm}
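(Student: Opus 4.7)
The plan is to derive the inequality from the Gauss equation and a single completing-the-square argument, then read off the equality cases from the inequalities that were used.

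First I would fix a point $p \in N$, a unit vector $X \in T_pN$, and an orthonormal basis $e_1 = X, e_2, \ldots, e_n$ of $T_pN$. The Gauss equation for the ambient space $\mathbb{R}^m(a)$ of constant sectional curvature $a$ reads
\begin{equation*}
K(X \wedge e_i) \;=\; a \;+\; \langle h(X,X), h(e_i,e_i)\rangle \;-\; \|h(X,e_i)\|^2.
\end{equation*}
Summing over $i = 2, \ldots, n$ and writing $nH = \sum_{j=1}^n h(e_j,e_j)$, I would obtain
\begin{equation*}
\mathrm{Ric}(X) - (n-1)a \;=\; n\,\langle h(X,X), H\rangle \;-\; \|h(X,X)\|^2 \;-\; \sum_{i=2}^{n}\|h(X,e_i)\|^2.
\end{equation*}

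Next, the key algebraic step is the identity
\begin{equation*}
\Bigl\| h(X,X) - \tfrac{n}{2}H \Bigr\|^2 \;=\; \|h(X,X)\|^2 \;-\; n\,\langle h(X,X), H\rangle \;+\; \tfrac{n^2}{4}\|H\|^2 \;\geq\; 0,
\end{equation*}
which rearranges to $n\langle h(X,X),H\rangle - \|h(X,X)\|^2 \leq \tfrac{n^2}{4}\|H\|^2$. Combining this with the trivial bound $-\sum_{i=2}^{n}\|h(X,e_i)\|^2 \leq 0$ yields inequality (\ref{1.1}) immediately.

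For the equality analysis, equality in (\ref{1.1}) forces both $h(X,X) = \tfrac{n}{2}H$ and $h(X,e_i) = 0$ for $i = 2,\ldots,n$. Under the assumption $H(p) = 0$ in (ii), this reduces to $h(X,Y) = 0$ for every $Y \in T_pN$, i.e.\ $X \in \mathcal{N}(p)$; conversely, if $X \in \mathcal{N}(p)$ then the right-hand side of the expression for $\mathrm{Ric}(X) - (n-1)a$ vanishes, matching the vanishing right-hand side of (\ref{1.1}). For (iii), if the equality holds for \emph{every} unit $X$, polarizing $h(X,X) = \tfrac{n}{2}H$ and combining with $h(X,e_i) = 0$ for all orthonormal choices shows that $N$ is totally umbilical at $p$ with $h(Y,Z) = \langle Y,Z\rangle H$; substituting this back into the relation forces $(n-2)H = 0$ at $p$, so either $n = 2$ (the umbilical case, where the conditions are automatically consistent) or $H(p) = 0$ and hence $h$ vanishes at $p$, making $p$ a geodesic point.

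The only real obstacle is bookkeeping in part (iii): one must carefully distinguish the two algebraic consequences of universal equality (the umbilicity relation and the identity $h(X,X) = \tfrac{n}{2}H$) and check that together they force the dichotomy $n=2$ or $h(p)=0$; the analytic content is entirely contained in the completing-the-square step.
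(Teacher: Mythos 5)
Your argument is correct and complete. Note that the paper itself gives no proof of this statement -- it is quoted from \cite{BYC1} as background -- but your derivation (Gauss equation, then the single completion of the square $\|h(X,X)-\tfrac{n}{2}H\|^2\geq 0$ together with discarding $\sum_{i\geq 2}\|h(X,e_i)\|^2$) is exactly the standard route, equivalent to Chen's original use of $(u+v)^2\geq 4uv$ applied componentwise to $nH^r=h^r_{11}+\sum_{i\geq 2}h^r_{ii}$. The only slip is cosmetic: in part (iii) the umbilicity relation you derive is $h(Y,Z)=\langle Y,Z\rangle\,\tfrac{n}{2}H$ rather than $\langle Y,Z\rangle H$; tracing this is precisely what yields $nH=\tfrac{n^2}{2}H$, hence $(n-2)H=0$ and the stated dichotomy, so the conclusion stands.
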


Inequality (\ref{1.1})is now named as the {\it Chen-Ricci inequality}. Afterwards, many papers studied similar Chen-Ricci inequalities for different kind of submanifolds in various ambient manifolds (cf.\cite{SPHMMT1,SPHMMT2,SPHMMT3,EKMMTMG1,JSKMKDMMT1,XL1,KMIMAO1,AMINR1,IM1,IM2,MMT1,MMTJSK1,PZLZ1,PZXLPLZ1}). Especially, B.-Y.Chen \cite{BYC2} proved (\ref{1.1}) also holds for Lagrangian submanifolds of a complex space form $M^n(a)$ of constant holomorphic sectional curvature $a$. But this inequality is not optimal in such a case. In fact it can be improved as follows.

\begin{thm}[\cite{SD1}, Theorem 3.1]\label{Theorem1.2}
  Let $N$ be a Lagrangian submanifold of real dimension $n(\geq 2)$ in a complex space form $M(a)$. Then for any point $p\in N$ and any unit vector $X\in T_pN$, we have
  \begin{equation}\label{1.2}
    Ric(X)\leq\frac{n-1}{4}a+\frac{(n-1)n}{4}\|H\|^2.
  \end{equation}
  The equality holds for all unit vectors in $T_pN$ if and only if either

  (i) $p$ is a totally geodesic point, or

  (ii) $n=2$ and $p$ is an $H$-umbilical point with $\lambda=3\mu$.
\end{thm}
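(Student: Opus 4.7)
The plan is to reduce \eqref{1.2} to a purely algebraic inequality on the components of the second fundamental form, exploiting two features of the Lagrangian case: the explicit curvature tensor of $M(a)$, and the total symmetry $h^r_{ij}=h^i_{rj}=h^j_{ri}$ of the cubic form $h^r_{ij}:=\langle h(e_i,e_j),Je_r\rangle$, which is available because on a Lagrangian submanifold $J$ carries tangent vectors to normal vectors. Starting from the Gauss equation and summing the sectional curvatures $K(e_1,e_i)$ over any orthonormal basis $\{e_1,\ldots,e_n\}$ with $e_1=X$, I would first derive
\begin{equation*}
Ric(X)=\frac{(n-1)a}{4}+n\langle h(X,X),H\rangle-\sum_{i=1}^{n}\|h(X,e_i)\|^{2},
\end{equation*}
so that \eqref{1.2} is equivalent to the algebraic statement $\Delta\geq 0$ with
\begin{equation*}
\Delta:=\tfrac{n(n-1)}{4}\|H\|^{2}-n\langle h(X,X),H\rangle+\sum_{i=1}^{n}\|h(X,e_i)\|^{2}.
\end{equation*}

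Next I would fix the frame by taking $e_1=X$ and rotating $e_2,\ldots,e_n$ so that the symmetric matrix $(h^1_{ij})_{i,j\geq 2}$ is diagonal. Writing $\lambda_i:=h^1_{ii}$, $\mu_k:=h^k_{11}$ ($k\geq 2$), $L:=nH^1$, $L_0:=L-\lambda_1$ and $M_k:=nH^k$, total symmetry then forces $h^k_{1j}=h^1_{kj}=0$ for $2\leq k\neq j\leq n$ and $h^k_{1k}=\lambda_k$, so every term of $\Delta$ can be expressed through $\lambda_i,\mu_k,M_k$ and through the remaining coefficients $h^k_{ij}$ with $i,j,k\geq 2$ (which enter only via $M_k$). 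Completing the square in each $M_k$ extracts the non-negative piece $\tfrac{n-1}{4n}\bigl(M_k-\tfrac{2n}{n-1}\mu_k\bigr)^{2}$ and leaves a residual $\tfrac{n-2}{n-1}\mu_k^{2}$; a Cauchy--Schwarz bound $\sum_{i\geq 2}\lambda_i^{2}\geq L_0^{2}/(n-1)$ then collapses the remaining $\lambda$-terms into the single perfect square $\tfrac{1}{4n(n-1)}\bigl[(n-1)\lambda_1-(n+1)L_0\bigr]^{2}$. The sum of these three non-negative pieces is exactly $\Delta$, which proves \eqref{1.2}.

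For the equality discussion one analyses when each of the three squares vanishes. If $n\geq 3$, then $\tfrac{n-2}{n-1}>0$ forces $\mu_k=0$ (hence $M_k=0$) for every $k\geq 2$, the Cauchy--Schwarz equality yields $\lambda_2=\cdots=\lambda_n=:\lambda$, and the last square gives $\lambda_1=(n+1)\lambda$; thus $h(e_1,e_1)=(n+1)\lambda Je_1$ and $h(e_1,e_j)=\lambda Je_j$ for $j\geq 2$. Repeating the argument at $X=e_j$ forces $h^1_{jj}=0$, so $\lambda=0$; hence $h(X,X)=0$ for every unit $X$ and polarization yields $h\equiv 0$, i.e.\ case~(i). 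If $n=2$ the $\mu_k^{2}$ coefficient disappears, leaving only $\lambda_1=3\lambda_2$ and $M_2=4\mu_2$; with $\lambda:=\lambda_1$ and $\mu:=\lambda_2$ these are precisely the H-umbilical relations $h(e_1,e_1)=\lambda Je_1$, $h(e_1,e_2)=\mu Je_2$, $h(e_2,e_2)=\mu Je_1$ with $\lambda=3\mu$, giving case~(ii). The main obstacle is discovering the exact decomposition of $\Delta$ into those three squares; once the identity is in place, both the inequality and the equality characterisation follow mechanically.
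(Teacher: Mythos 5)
Your argument is correct in substance and reaches the conclusion by a genuinely different, more self-contained route than the paper. The paper (proof of Theorem \ref{thm4.1} with $b=c=0$) keeps an arbitrary frame, discards the nonnegative off-diagonal contribution $\sum_{2\le i\ne j\le n}(h^1_{ij})^2$, and splits what remains into $n$ quadratic forms in the diagonal entries $h^r_{ii}$ alone, each bounded by a pre-packaged scalar inequality (Examples \ref{ex3.2} and \ref{ex3.3}, themselves derived from the spectral criterion of Theorem \ref{thm3.2}). You instead diagonalize $(h^1_{ij})_{i,j\ge 2}$, which makes those off-diagonal terms vanish identically, and then prove the same two scalar inequalities on the spot: your identity $\frac{n-1}{4n}L^2-\lambda_1L+\lambda_1^2+\frac{L_0^2}{n-1}=\frac{1}{4n(n-1)}\bigl[(n-1)\lambda_1-(n+1)L_0\bigr]^2$ is exactly Example \ref{ex3.2} in disguise, and your completion of the square in $M_k$ is Example \ref{ex3.3} with the slack redistributed so that the residue $\frac{n-2}{n-1}\mu_k^2$ appears explicitly (the paper instead gets $\frac18(M_k-4\mu_k)^2$ and uses $\frac18\le\frac{n-1}{4n}$ afterwards). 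Your version buys independence from Section 3; the paper's buys reusable lemmas that are quoted again in Section 5. One phrasing quibble: after the Cauchy--Schwarz step the three squares sum to a lower bound for $\Delta$, not to $\Delta$ itself.

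The one genuine loose end is the $n=2$ equality case. In your frame the equality conditions give $\lambda_1=3\lambda_2$ and $M_2=4\mu_2$, i.e.\ $h^2_{22}=3h^2_{11}$, but they do \emph{not} force $\mu_2=h^2_{11}=0$; so $h(e_1,e_1)=3\lambda_2\,Je_1+\mu_2\,Je_2$ is not literally of the $H$-umbilical form in that frame, and the identification you assert does not yet follow. You need one more step: either choose $e_1$ from the start so that $Je_1$ is parallel to $H$ (the paper's device), which makes $M_2=0$ and hence $\mu_2=0$; or observe that the cubic form $\langle h(v,v),Jv\rangle=3(x_1^2+x_2^2)(\lambda_2x_1+\mu_2x_2)$ becomes the $\lambda=3\mu$ $H$-umbilical form after rotating $(e_1,e_2)$ so that $\lambda_2x_1+\mu_2x_2$ is a multiple of the first coordinate. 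With that rotation supplied, case (ii) follows and the proof is complete.
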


\begin{rmk}
  By {\it a Lagrangian $H$-umbilical submanifold} of a K$\ddot{\mathrm{a}}$hler manifold we mean a Lagrangian submanifold whose second fundamental form takes the following simple form:
  \begin{align*}
    & h(e_1,e_1)=\lambda Je_1,\ h(e_2,e_2)=\cdots=h(e_n,e_n)=\mu Je_1,\\
    & h(e_1,e_j)=\mu Je_j,\ h(e_j,e_k)=0,\ j\not=k,\ j,k=2,\cdots,n
  \end{align*}
  for some suitable functions $\lambda$ and $\mu$ with respect to some suitable orthonormal local frame field. This concept was introduced by B.-Y.Chen in \cite{BYC3,BYC4} to find and investigate the "simplest" Lagrangian submanifolds next to the totally geodesic ones in complex space forms.
\end{rmk}

\begin{rmk}
  T.Oprea \cite{TO1} first proved the improved Chen-Ricci inequality (\ref{1.2}) by using an optimization technique. Afterwards, S.Deng provided another proof by establishing some elementary algebraic inequalities. In this way, he also completely characterized Lagrangian submanifolds satisfying the equality case.
\end{rmk}

In \cite{TO2} T.Oprea introduced another intrinsic invariant related to Ricci curvature for submanifolds as follows:

\begin{equation}\label{01.1}
  \delta_k:=\tau-\frac{1}{k-1}\min_{\substack{L,\dim L=k,\\ X\in L, \|X\|=1}} Ric_L(X),
\end{equation}
where $\tau$ is the scalar curvature of the submanifold, $L$ is a linear subspace of the tangent space of the submanifold, $Ric_L(X)$ is the Ricci curvature of the submanifold of $L$ at $X$.

For Lagrangian submanifolds of complex space forms, T.Oprea proved the following inequality for $\delta_n$.

\begin{thm}[\cite{TO2}, Theorem 4.2]\label{Theorem1.3}
  Let $N$ be a Lagrangian submanifold of real dimension $n(\geq 3)$ of a complex space form $M(a)$. Then
  \begin{equation*}
    \delta_n(M)\leq \frac{(n-2)(n+1)}{8}a+\frac{(3n-1)(n-2)n^2}{2(n-1)(3n+5)}\|H\|^2.
  \end{equation*}
\end{thm}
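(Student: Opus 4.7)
The plan is to reduce the statement, via the Gauss equation for Lagrangian submanifolds of a complex space form, to a purely algebraic quadratic inequality in the components of the second fundamental form, and then to establish that inequality by a constrained optimization argument.

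First I would set up notation. Fix a point $p\in N$, choose an orthonormal basis $e_1,\dots,e_n$ of $T_pN$, and write $h^k_{ij}=\langle h(e_i,e_j),Je_k\rangle$; these numbers are totally symmetric in $i,j,k$ because the ambient manifold is K\"ahler and $N$ is Lagrangian. The Gauss equation for $M(a)$ yields the standard identities
\begin{equation*}
  2\tau=\tfrac{n(n-1)}{4}a+n^{2}\|H\|^{2}-\|h\|^{2},
\end{equation*}
\begin{equation*}
  Ric(e_{1})=\tfrac{n-1}{4}a+n\langle H,h(e_{1},e_{1})\rangle-\sum_{j=1}^{n}\|h(e_{1},e_{j})\|^{2}.
\end{equation*}
Since $\dim L=n$ forces $L=T_pN$, we have $\delta_n=\max_{\|X\|=1}\bigl(\tau-\tfrac{Ric(X)}{n-1}\bigr)$, so we may assume $e_1$ realizes this maximum.

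Next I would isolate the ambient-curvature contribution: a direct computation gives
\begin{equation*}
  \tfrac{n(n-1)}{8}a-\tfrac{1}{n-1}\cdot\tfrac{n-1}{4}a=\tfrac{(n-2)(n+1)}{8}a,
\end{equation*}
which already produces the first term in the claimed bound. What remains is the purely algebraic inequality
\begin{equation*}
  \tfrac{n^{2}\|H\|^{2}-\|h\|^{2}}{2}-\tfrac{1}{n-1}\Bigl[n\langle H,h(e_{1},e_{1})\rangle-\textstyle\sum_{j}\|h(e_{1},e_{j})\|^{2}\Bigr]\;\leq\;\tfrac{(3n-1)(n-2)n^{2}}{2(n-1)(3n+5)}\|H\|^{2},
\end{equation*}
a quadratic inequality in the totally symmetric array $(h^k_{ij})$ subject to the linear constraints $\sum_i h^k_{ii}=nH^k$ for $k=1,\dots,n$. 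By adapting the frame $e_2,\dots,e_n$ if necessary, I would further reduce to the case $H$ aligned with $Je_1$, i.e. $H^1=\|H\|$ and $H^k=0$ for $k\geq 2$, which strips many cross-terms from the objective.

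To tackle the remaining quadratic inequality I would maximize the left-hand side over $(h^k_{ij})$ with $H$ fixed, using Lagrange multipliers. After using the total symmetry to collect the genuinely independent variables (essentially $h^1_{11}$, the diagonal entries $h^k_{kk}$ for $k\geq 2$, the partly-diagonal entries $h^1_{jj}=h^j_{1j}$ for $j\geq 2$, the entries $h^k_{jj}$ for $k\neq j$ with $k\geq 2$, and the off-diagonal $h^k_{ij}$ with three distinct indices), the stationarity conditions decouple into several small linear systems. Solving these systems produces the critical value $\tfrac{(3n-1)(n-2)n^{2}}{2(n-1)(3n+5)}\|H\|^{2}$, the coefficient $\tfrac{3n-1}{3n+5}$ arising as the Cramer-rule denominator of the $2{\times}2$ subsystem that couples $h^1_{11}$ with $\sum_{j\geq 2}h^1_{jj}$; a negative-definiteness check on the reduced Hessian confirms this critical point is a maximum.

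The main obstacle will be executing the Lagrange-multiplier optimization cleanly in the presence of the total symmetry of $h^k_{ij}$, which creates nontrivial identifications between entries that look different on their indices but are in fact equal (for instance $h^j_{11}=h^1_{1j}$). Careful bookkeeping of these identifications --- or, equivalently, differentiating with respect to $h$ viewed as a section of $S^{3}T^{*}N$ --- is the source of all the subtlety, and it is what ultimately produces the rather specific constant $\tfrac{(3n-1)(n-2)n^{2}}{2(n-1)(3n+5)}$ in place of a more transparent expression. Presumably the ``two general quadratic inequalities'' advertised in the abstract are designed precisely to package this bookkeeping once and for all.
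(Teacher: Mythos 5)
Your reduction via the Gauss equation is correct and agrees with what the paper does (the paper works with $(n-1)\tau-Ric(X)$ rather than $\tau-\tfrac{1}{n-1}Ric(X)$, which is the same thing, and isolates the ambient term $\tfrac{(n-2)(n+1)}{8}a$ in exactly the way you describe). But there are two genuine problems with the rest of the plan. First, the normalization ``$H$ aligned with $Je_1$'' is not available: $e_1=X$ must be an \emph{arbitrary} unit vector (you need the bound at the direction minimizing $Ric$, over which you have no control), and rotating only $e_2,\dots,e_n$ can at best arrange $H\in\mathrm{span}\{Je_1,Je_2\}$ --- it cannot change the component $H^1=g(H,Je_1)$. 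Worse, this normalization discards precisely the extremal case: in the paper's proof the diagonal block in the $h^1_{ii}$ variables contributes at most $\tfrac{(n-2)(n-1)}{2(n+1)}(\sum_i h^1_{ii})^2$ (Example 3.4) while each block in the $h^r_{ii}$, $r\ge 2$, contributes up to $\tfrac{(n-2)(3n-1)}{2(3n+5)}(\sum_i h^r_{ii})^2$ (Example 3.5), and since $\tfrac{n-1}{n+1}<\tfrac{3n-1}{3n+5}$ the worst case is $H$ \emph{orthogonal} to $Je_1$. Forcing $H\parallel Je_1$ would therefore yield the smaller constant $\tfrac{(n-2)n^2}{2(n-1)(n+1)}$ instead of $\tfrac{(3n-1)(n-2)n^2}{2(n-1)(3n+5)}$, and your attribution of $\tfrac{3n-1}{3n+5}$ to the subsystem coupling $h^1_{11}$ with $\sum_{j\ge2}h^1_{jj}$ is likewise misplaced --- that subsystem produces $\tfrac{n-1}{n+1}$. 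Second, the decisive step --- that the constrained maximum of your quadratic functional equals $\tfrac{(3n-1)(n-2)n^2}{2(n-1)(3n+5)}\|H\|^2$ --- is asserted, not computed; since that constant \emph{is} the theorem, the proof is not actually there.

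For comparison, the paper deliberately avoids the Lagrange-multiplier route (which is Oprea's original argument) in favour of an algebraic one. It first throws away the manifestly nonpositive terms coming from the off-diagonal components $h^r_{ij}$ with distinct indices, which decouples the remaining expression into $n$ separate quadratic forms $f_1,f_2,\dots,f_n$ in the diagonal entries $(h^r_{11},\dots,h^r_{nn})$ only. Each $f_r$ is then bounded by one of the two ``general quadratic inequalities'' (Theorems 3.1 and 3.2), whose proofs consist of writing down the symmetric matrix of the form, computing its characteristic polynomial explicitly, and checking via Lemma 3.1 that all eigenvalues are nonnegative; Examples 3.4 and 3.5 are the two specializations needed here. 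If you want to salvage the optimization approach, you must keep all $n$ components $H^1,\dots,H^n$ as free constraint data, carry out the maximization in full, and only at the end take the worst distribution of $\|H\|^2$ among the normal directions --- which is where $\tfrac{3n-1}{3n+5}$ actually comes from.
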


As a generalization of complex space forms, G.Ganchev and V.Mihova \cite{GGVM2,GGVM3} introduced the notion of {\it K$\ddot{a}$hler manifolds of quasi-constant holomorphic sectional curvatures} (briefly {\it K$\ddot{a}$hler QCH-manifolds}). This is the K$\ddot{\mathrm{a}}$hler analogue of the notion of a Riemannian manifold of quasi-constant sectional curvatures \cite{VBMP,GGVM1}. The main purpose of this paper is to provide some inequalities related to Ricci curvatures for Lagrangian submanifolds of this kind of ambient space, which can generalize Theorem \ref{Theorem1.2} and Theorem \ref{Theorem1.3}. In addition, we should point out that the method we used in this paper is different from T.Oprea's optimization technique, it can be viewed as a generalization of S.Deng's algebraic method. In fact, by using the theory of linear algebra, we can establish two general quadratic inequalities (see Theorem \ref{thm3.1} and Theorem \ref{thm3.2}), which can cover many special inequalities for the use of the proofs of Chen-Ricci inequalities and the inequalities related to the T.Oprea's invariant. And we think these general quadratic inequalities can also provide many other special inequalities which can be used  to obtain other kinds of geometric inequalities for submanifolds. For instance, we will use them to study inequalities related to Casorati curvatures for submanifolds in another paper.

%------------------------------------------------------------------------------------%

\section{ Preliminaries}
\vskip 0.4 true cm

In this section, we recall some basic elements of the theory of K$\ddot{\mathrm{a}}$hler manifolds of quasi-constant holomorphic sectional curvatures and some basic formulas in the geometry of submanifolds.

Let $(M,g,J,D)$ be a 2n-dimensional K$\ddot{\mathrm{a}}$hler manifold with metric $g$, complex structure $J$ and $J$-invariant distribution $D$
of codimension 2. The Lie algebra of all $C^{\infty}$ vector fields on $M$ will be denoted by $\chi(M)$ and $T_pM$ will stand for the tangent space at any point $p\in M$. Assume that  $\xi$ is a local unit vector field around $p$ such that $D^{\bot}(p)=span\{ \xi,J\xi \}$, where $D^{\bot}(p)$ is the 2-dimensional $J$-invariant orthogonal complement to $D(p)$. Denote by $\eta$ and $\tilde{\eta}$ the unit 1-forms corresponding to $\xi$ and $J\xi$, respectively, i.e.,
\begin{equation}
  \eta(\bar X) = g(\xi, \bar X),\ \  \tilde{\eta}(\bar X) = g(J\xi, \bar X) = -\eta(J\bar X), \ \bar X\in\chi(M).
\end{equation}

Let $\bar\nabla$ be the Levi-Civita connection of the metric $g$. The Riemannian curvature tensor $\bar R$ of type (1,3), resp. (1,4), is
given by
\begin{equation*}
  \bar R(\bar X,\bar Y )\bar Z = \bar\nabla_{\bar X} \bar\nabla_{\bar Y} \bar Z -\bar\nabla_{\bar Y}\bar\nabla_{\bar X}\bar Z -\bar\nabla_{[\bar X,\bar Y ]}\bar Z,
\end{equation*}
\begin{equation*}
  \bar R(\bar X,\bar Y, \bar Z,\bar U) = g(\bar R(\bar X,\bar Y )\bar Z,\bar U),
\end{equation*}
for $\bar X,\bar Y,\bar Z,\bar U\in \chi(M)$.

There are three important invariant K$\ddot{\mathrm{a}}$hler tensors on $M$ defined as follows \cite{GGVM1}:

\begin{align}\label{QCH-pi}
  & 4\pi(\bar X,\bar Y,\bar Z,\bar U)\\\notag  = & g(\bar Y,\bar Z)g(\bar X,\bar U) -g(\bar X,\bar Z)g(\bar Y,\bar U)+g(J\bar Y,\bar Z)g(J\bar X,\bar U)-g(J\bar X,\bar Z)g(J\bar Y,\bar U)\\ \notag
   - & 2g(J\bar X,\bar Y )g(J\bar Z,\bar U),
\end{align}

\begin{align}\label{QCH-Phi}
  & 8\Phi(\bar X,\bar Y, \bar Z,\bar U)\\\notag  =& g(\bar Y, \bar Z)\{\eta(\bar X)\eta(\bar U) +\tilde\eta(\bar X)\tilde\eta(\bar U)\}-g(\bar X,\bar Z)\{\eta(\bar Y )\eta(\bar U) +\tilde\eta(\bar Y )\tilde\eta(\bar U)\}\\ \notag
  + &g(\bar X,\bar U)\{\eta(\bar Y )\eta(\bar Z) +\tilde\eta(\bar Y )\tilde\eta(\bar Z)\}-g(\bar Y,\bar U)\{\eta(\bar X)\eta(\bar Z) +\tilde\eta(\bar X)\tilde\eta(\bar Z)\}\\ \notag
   +&g(J\bar Y, \bar Z)\{\eta(\bar X)\tilde\eta(\bar U)-\eta(\bar U)\tilde\eta(\bar X)\}-g(J\bar X,\bar Z)\{\eta(\bar Y )\tilde\eta(\bar U) -\eta(\bar U )\tilde\eta(\bar Y)\}\\ \notag
   +&g(J\bar X,\bar U)\{\eta(\bar Y )\tilde\eta(\bar Z) -\eta(\bar Z )\tilde\eta(\bar Y)\}-g(J\bar Y,\bar U)\{\eta(\bar X)\tilde\eta(\bar Z) -\eta(\bar Z)\tilde\eta(\bar X)\}\\ \notag
   -&2g(J\bar X,\bar Y )\{\eta(\bar Z)\tilde\eta(\bar U)-\eta(\bar U)\tilde\eta(\bar Z)\}-2g(J\bar Z,\bar U)\{\eta(\bar X)\tilde\eta(\bar Y ) -\eta(\bar Y )\tilde\eta(\bar X)\},
\end{align}

%\begin{align}\label{QCH-Phi}
%  8\Phi(\bar X,\bar Y, \bar Z,\bar U) & = g(\bar Y, \bar Z)\{\eta(\bar X)\eta(\bar U) +\tilde\eta(\bar X)\tilde\eta(\bar U)\}-g(\bar X,\bar Z)\{\eta(\bar Y )\eta(\bar U) +\tilde\eta(\bar Y )\tilde\eta(\bar U)\}\\ \notag
%  & +g(\bar X,\bar U)\{\eta(\bar Y )\eta(\bar Z) +\tilde\eta(\bar Y )\tilde\eta(\bar Z)\}-g(\bar Y,\bar U)\{\eta(\bar X)\eta(\bar Z) +\tilde\eta(\bar X)\tilde\eta(\bar Z)\}\\ \notag
%  & +g(J\bar Y, \bar Z)\{\eta(\bar X)\tilde\eta(\bar U)-\eta(\bar U)\tilde\eta(\bar X)\}-g(J\bar X,\bar Z)\{\eta(\bar Y )\tilde\eta(\bar U) -\eta(\bar U )\tilde\eta(\bar Y)\}\\ \notag
%  & +g(J\bar X,\bar U)\{\eta(\bar Y )\tilde\eta(\bar Z) -\eta(\bar Z )\tilde\eta(\bar Y)\}-g(J\bar Y,\bar U)\{\eta(\bar X)\tilde\eta(\bar Z) -\eta(\bar Z)\tilde\eta(\bar X)\}\\ \notag
%  & -2g(J\bar X,\bar Y )\{\eta(\bar Z)\tilde\eta(\bar U)-\eta(\bar U)\tilde\eta(\bar Z)\}-2g(J\bar Z,\bar U)\{\eta(\bar X)\tilde\eta(\bar Y ) -\eta(\bar Y )\tilde\eta(\bar X)\},
%\end{align}

\begin{align}\label{QCH-Psi}
  	& \Psi(\bar X, \bar Y ,\bar Z,\bar U)\\\notag = & \eta(\bar Y )\eta(\bar Z)\tilde\eta(\bar X)\tilde\eta(\bar U) - \eta(\bar X)\eta(\bar Z)\tilde\eta(\bar Y )\tilde\eta(\bar U)+\eta(\bar X )\eta(\bar U)\tilde\eta(\bar Y)\tilde\eta(\bar Z)\\\notag  - &\eta(\bar Y)\eta(\bar U)\tilde\eta(\bar X )\tilde\eta(\bar Z),\\ \notag
  %&= \{(\eta\wedge\tilde\eta)\otimes(\tilde\eta\wedge\eta)\}(\bar X,\bar Y,\bar Z,\bar U),
\end{align}
where $\bar X,\bar Y,\bar Z,\bar U\in \chi(M)$.

\begin{defn}(\cite{GGVM1})
 Let $(M, g, J, D)$ be a K$\ddot{\mathrm{a}}$hler manifold with ${\mathrm{dim}} M = 2n \geq 4$ and $J$-invariant distribution $D$ of codimension 2. The manifold
is said to be of {\it quasi-constant holomorphic sectional curvatures (a K$\ddot{\mathrm{a}}$hler QCH-manifold)} if for any holomorphic section
$span\{X, JX\}$ generated by the unit tangent vector $X\in T_pM, p \in M$ with $\varphi=\angle(span\{X, JX\}, span\{\xi, J\xi\})$ the
Riemannian sectional curvature\\ $R(X, JX, JX, X)$ may only depend on the point $p \in M$ and the angle $\varphi$, i.e.
\begin{equation*}
  \bar R(X, JX, JX, X)= f(p,\varphi),\ \  p \in M,  \varphi\in[0,\frac{\pi}{2}].
\end{equation*}
\end{defn}

\begin{rmk}
  This notion corresponds to the notion of a Riemannian manifold of quasi-constant sectional curvature \cite{VBMP,GGVM2}.
\end{rmk}

In \cite{GGVM1}, G.Ganchev and V.Mihova found a curvature identity characterizing K$\ddot{\mathrm{a}}$hler QCH-manifolds.

\begin{prop}\label{QCH-curvature}
  Let $(M, g, J, D)({\mathrm{dim}} M = 2n \geq 4)$ be a K\"{a}hler manifold with J invariant distribution $D$ of codimension 2. Then
$(M, g, J,D)$ is of quasi-constant holomorphic sectional curvatures if and only if
\begin{equation}\label{Riemanniancurvature}
  \bar R = a\pi + b\Phi + c\Psi,
\end{equation}
where $a, b$ and $c$ are functions on $M$ and the tensors $\pi,\Phi$ and $\Psi$  are given by (\ref{QCH-pi}), (\ref{QCH-Phi}) and (\ref{QCH-Psi}), respectively.
\end{prop}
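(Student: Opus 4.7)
The plan is to prove the equivalence in both directions, with the forward (``if'') implication serving as a direct computation that reveals the precise form of the holomorphic sectional curvature function. The key preparatory observation is that for a unit vector $X\in T_pM$, setting $\alpha:=\eta(X)$ and $\beta:=\tilde\eta(X)$, the $J$-invariance of $D^\perp$ yields $\eta(JX)=-\beta$ and $\tilde\eta(JX)=\alpha$, while projecting $\mathrm{span}\{X,JX\}$ onto $D^\perp$ gives $\cos^2\varphi=\alpha^2+\beta^2$.

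For the ``if'' direction, I would substitute $\bar X=\bar U=X$ and $\bar Y=\bar Z=JX$ into (\ref{QCH-pi}), (\ref{QCH-Phi}), and (\ref{QCH-Psi}). Using $|X|=|JX|=1$, $g(X,JX)=0$, and the relations above, the terms collapse to
\begin{equation*}
\pi(X,JX,JX,X)=1,\quad \Phi(X,JX,JX,X)=\alpha^2+\beta^2,\quad \Psi(X,JX,JX,X)=(\alpha^2+\beta^2)^2.
\end{equation*}
Hence (\ref{Riemanniancurvature}) gives $\bar R(X,JX,JX,X)=a+b\cos^2\varphi+c\cos^4\varphi$, which depends only on $p$ and $\varphi$, confirming the implication.

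For the ``only if'' direction, at each $p\in M$ I would define $a(p),b(p),c(p)$ by matching the assumed function $f(p,\varphi)$ against $a+b\cos^2\varphi+c\cos^4\varphi$ at three distinguished angles: $\varphi=\pi/2$ (any unit $X\in D$), $\varphi=0$ (take $X=\xi$), and some intermediate $\varphi_0\in(0,\pi/2)$. The resulting $3\times 3$ linear system is Vandermonde-like in $\cos^2\varphi$ and invertible, so $a,b,c$ are uniquely and smoothly determined on $M$. I would then form $\tilde R:=\bar R-a\pi-b\Phi-c\Psi$, which inherits all the algebraic symmetries of a Kähler curvature tensor, and verify that $\tilde R(X,JX,JX,X)=0$ for every unit $X\in T_pM$. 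The classical polarization lemma for Kähler curvature tensors (any such tensor is determined by its holomorphic sectional curvatures, via the first Bianchi identity combined with the $J$-invariance $\tilde R(JX,JY,JZ,JW)=\tilde R(X,Y,Z,W)$) then forces $\tilde R\equiv 0$, establishing (\ref{Riemanniancurvature}).

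The main obstacle is verifying the pointwise identity $\tilde R(X,JX,JX,X)=0$ at every unit $X$, not merely at the three test vectors used to pin down $a,b,c$. My plan is to decompose $X=\cos\varphi\,V+\sin\varphi\,W$ with $V\in D^\perp$ and $W\in D$ unit, expand $\bar R(X,JX,JX,X)$ as a polynomial in $(\cos\varphi,\sin\varphi)$ whose coefficients are curvature quantities in $V,JV,W,JW$, and use the hypothesis that this polynomial depends only on $\varphi$ to conclude that each coefficient is independent of $V\in S^1\subset D^\perp$ and $W\in S^{2n-3}\subset D$. This rigidity, together with the Kähler symmetries of $\bar R$, is expected to collapse the expansion to exactly $a+b\cos^2\varphi+c\cos^4\varphi$ (by the same computation as in the ``if'' direction), whence $\tilde R(X,JX,JX,X)=0$ follows. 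Extracting this rigidity cleanly and controlling the mixed curvature quantities such as $\bar R(V,JV,JW,W)$ and $\bar R(V,W,JW,JV)$ is where the real work lies.
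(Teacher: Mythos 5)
The paper itself offers no proof of this proposition: it is imported verbatim from Ganchev--Mihova \cite{GGVM2}, so there is no in-paper argument to compare yours against; I can only assess your plan on its merits. Your strategy is the standard one and is sound. The ``if'' direction is complete: the evaluations $\pi(X,JX,JX,X)=1$, $\Phi(X,JX,JX,X)=\eta(X)^2+\tilde\eta(X)^2=\cos^2\varphi$ and $\Psi(X,JX,JX,X)=(\eta(X)^2+\tilde\eta(X)^2)^2=\cos^4\varphi$ do check out against \eqref{QCH-pi}--\eqref{QCH-Psi}, using $\eta(JX)=-\tilde\eta(X)$ and $\tilde\eta(JX)=\eta(X)$.

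Two remarks on the ``only if'' direction. First, the step you flag as ``where the real work lies'' is easier than you fear: you never need to control the individual mixed quantities $\bar R(V,JV,JW,W)$, $\bar R(V,W,JW,JV)$, and so on. Writing $X=\cos\varphi\,V+\sin\varphi\,W$ with unit $V\in D^{\bot}$, $W\in D$, the quantity $\bar R(X,JX,JX,X)$ is a homogeneous quartic in $(\cos\varphi,\sin\varphi)$. Replacing $W$ by $-W$ preserves $\varphi$ but flips the sign of the $\cos^{3}\varphi\sin\varphi$ and $\cos\varphi\sin^{3}\varphi$ coefficients, so both vanish; the $\cos^{4}\varphi$ coefficient is the sectional curvature of the fixed plane $D^{\bot}$ (independent of $V$), the $\sin^{4}\varphi$ coefficient equals $f(p,\pi/2)$ by hypothesis, and the $\cos^{2}\varphi\sin^{2}\varphi$ coefficient is then forced to be independent of $(V,W)$ because everything else in the identity $\bar R(X,JX,JX,X)=f(p,\varphi)$ is. This already yields $f(p,\varphi)=a+b\cos^{2}\varphi+c\cos^{4}\varphi$ as a whole, hence $\tilde R(X,JX,JX,X)=0$ for every unit $X$; the polarization lemma then does all the remaining work, and no further rigidity of individual curvature components is required. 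Second, the step you gloss over is the one that genuinely needs verification: for the polarization lemma to apply to $\tilde R=\bar R-a\pi-b\Phi-c\Psi$, the tensors $\Phi$ and $\Psi$ must themselves satisfy the first Bianchi identity and the K\"ahler symmetry $T(J\cdot,J\cdot,\cdot,\cdot)=T(\cdot,\cdot,\cdot,\cdot)$ in addition to the pair and skew symmetries. This is true --- it is precisely why Ganchev and Mihova define them as they do --- but it is a nontrivial check on the lengthy expressions \eqref{QCH-Phi} and \eqref{QCH-Psi} that your proof must actually include; as written, ``$\tilde R$ inherits all the algebraic symmetries of a K\"ahler curvature tensor'' is an assertion rather than a fact.
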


Now suppose $N$ is a Lagrangian submanifold of $M$. This means the complex structure $J$ carries each tangent space of $N$ into its corresponding normal space. Denote by $R$ the Riemannian curvature tensor of $N$ associated to the induced Levi-Civita connection, $h$ the second fundamental form. Then the Gauss equation is

\begin{equation}\label{Gausseq}
  R(X,Y,Z,U)=\bar R(X,Y,Z,U)+g(h(X,U),h(Y,Z))-g(h(X,Z),h(Y,U)),
\end{equation}
where $X,Y,Z,U\in\chi(N)$.

At a point $p\in N$, we can choose an orthonormal basis
\begin{equation}\label{frame}
\{e_1,\cdots,e_n,Je_1,\cdots,Je_n\}
\end{equation}
of $T_pM$ such that $\{e_1,\cdots,e_n\}$ is a basis of $T_pN$. The mean curvature vector $H(p)$ is
\begin{equation*}
  H(p)=\frac{1}{n}\sum_{i=1}^n h(e_i,e_i),
\end{equation*}
thus
\begin{equation*}
  n^2\|H(p)\|^2=\sum_{i,j=1}^ng(h(e_i,e_i),h(e_j,e_j)).
\end{equation*}
The submanifold $N$ of $M$ is called {\it totally geodesic} if $h=0$, and it is called {\it minimal} if $H=0$. Write $h^r_{ij}=g(h(e_i,e_j),Je_r)$. Noting that $N$ is Lagrangian, it follows that \cite{BYCKO1}
\begin{equation}\label{symmetryofh}
  h^r_{ij}=h^i_{rj}=h^r_{ji},\ i,j,r=1,\cdots,n.
\end{equation}
The Gauss equation (\ref{Gausseq}) can be written in the form of components as follows
\begin{equation}\label{Gausseqcomponent}
  R(e_i,e_j,e_k,e_l)=\bar R(e_i,e_j,e_k,e_l)+\sum_{r=1}^n (h^r_{il}h^r_{jk}-h^r_{ik}h^r_{jl}).
\end{equation}

The scalar curvature $\tau(p)$ of $N$ at the point $p$ is defined by
\begin{equation}\label{scalarcuvature1}
  \tau(p)=\sum_{1\leq i<j\leq n}K(e_i\wedge e_j),
\end{equation}
where $K(e_i\wedge e_j)=R(e_i,e_j,e_j,e_i)$ is  the sectional curvature of $N$ of the plane section spanned by $e_i$ and $e_j$. From (\ref{Gausseqcomponent}) we have
\begin{equation}\label{scalarcuvature2}
  \tau(p)=\sum_{1\leq i<j\leq n}\bar R(e_i,e_j,e_j,e_i)+\sum_{r=1}^n\sum_{1\leq i<j\leq n}[h^r_{ii}h^r_{jj}-(h^r_{ij})^2].
\end{equation}

For any unit vector $X$ in $T_pN$, we may choose the orthonormal basis (\ref{frame}) such that $e_1=X$. Then the Ricci curvature of $N$ at $X$ is defined by
\begin{equation*}
  Ric(X)=\sum_{i=2}^n K(e_1\wedge e_i).
\end{equation*}
From (\ref{Gausseqcomponent}) we have
\begin{equation}\label{eqRic}
  Ric(X)=\sum_{i=2}^n\bar R(e_1,e_i,e_i,e_1)+\sum_{r=1}^n\sum_{i=2}^n[h^r_{11}h^r_{ii}-(h^r_{1i})^2].
\end{equation}

%------------------------------------------------------------------------------------%

\section{Two general quadratic inequalities}
\vskip 0.4 true cm

In this section we will prove two general quadratic inequalities. These inequalities can cover many special quadratic inequalities (see, for example, \cite{SD1,TO2}) which are the key to prove the Chen-Ricci inequality and the inequality of T.Oprea's invariant. To prove the first one we need the following simple algebraic lemma.

\begin{lem}\label{lem3.1}
  If real numbers $a,b,c$ satisfy
  \begin{equation*}
    a+b+c\geq 0,\ ab+ac+bc\geq 0,\ abc\geq 0,
  \end{equation*}
  then $a,b,c$ are all non-negative.
\end{lem}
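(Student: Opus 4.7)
The plan is to view $a,b,c$ as the real roots of the cubic polynomial
\begin{equation*}
p(x) = (x-a)(x-b)(x-c) = x^{3} - e_{1}x^{2} + e_{2}x - e_{3},
\end{equation*}
where $e_{1}=a+b+c$, $e_{2}=ab+ac+bc$ and $e_{3}=abc$ are the elementary symmetric polynomials of $a,b,c$. The three hypotheses of the lemma state precisely that $e_{1}\ge 0$, $e_{2}\ge 0$ and $e_{3}\ge 0$. So the plan is to show that under these sign conditions, $p$ cannot vanish at any negative number.

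The key step is then a direct sign check: for any $x<0$, we have $x^{3}<0$, while $-e_{1}x^{2}\le 0$, $e_{2}x\le 0$ and $-e_{3}\le 0$, hence $p(x)<0$. In particular $p(x)\neq 0$ for every $x<0$. Since $a,b,c$ are roots of $p$, none of them can be negative, which is exactly the conclusion.

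There is essentially no obstacle here; the lemma is elementary and its content is equivalent to the classical observation that a real-rooted monic polynomial with alternating-sign coefficient pattern (no negative coefficient in the reciprocal polynomial $p(-x)$) has only non-negative roots. If one prefers a proof without invoking the polynomial, the same argument may be written as a short direct case analysis: assume for contradiction that, say, $a<0$; then $abc\ge 0$ forces $bc\le 0$, hence $ab+ac+bc\ge 0$ yields $a(b+c)\ge -bc\ge 0$, which together with $a<0$ gives $b+c\le 0$, and finally $a+b+c\le a<0$, contradicting the first hypothesis. Either presentation fits in a few lines, so I would simply write down the polynomial computation as the cleanest option.
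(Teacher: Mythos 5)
Your proof is correct, but it takes a genuinely different route from the paper. The paper argues by contradiction with a case split: assuming $c<0$, it uses $abc\ge 0$ to get $ab\le 0$, takes $b\le 0$ without loss of generality, and then combines the remaining hypotheses to derive $bc\ge (b+c)^2$, which it rewrites as $\left(b+\tfrac{c}{2}\right)^2+\tfrac{3}{4}c^2\le 0$ to force $c=0$, a contradiction. Your main argument instead reads the three hypotheses as sign conditions on the elementary symmetric functions and observes that the monic cubic $p(x)=x^3-e_1x^2+e_2x-e_3$ is strictly negative for every $x<0$ (the leading term $x^3$ is strictly negative and the other three are $\le 0$), so it has no negative root. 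This is cleaner: it avoids the case analysis and the completed square, and it generalizes verbatim to $n$ real numbers all of whose elementary symmetric polynomials are non-negative, which is worth noting since the paper applies the lemma to the roots of a cubic factor of a characteristic polynomial anyway. Your fallback direct argument (assume $a<0$, deduce $bc\le 0$, then $a(b+c)\ge -bc\ge 0$, hence $b+c\le 0$ and $a+b+c\le a<0$) is also valid and is in fact shorter than the paper's own contradiction argument.
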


\begin{proof}
  Otherwise, there exists at least one negative among $a,b,c$. Without loss of generality, we may assume $c<0$. From $abc\geq 0$ we know that $ab\leq 0$, thus there exists one non-posibive between $a$ and $b$. Assume that $b\leq 0$. From $ab+ac+bc\geq 0$, we know that $bc\geq -a(b+c)$. From $a+b+c\geq 0$, we know that $a\geq -(b+c)>0$. Therefore, $bc\geq (b+c)^2$, which is equivalent to $(b+\frac{c}{2})^2+\frac{3}{4}c^2\leq 0$. It follows that $c=0$ which is a contradiction with $c<0$. Hence $a\geq 0, b\geq 0, c\geq 0$.
\end{proof}

\begin{thm}\label{thm3.1}
  Let $\mu,\alpha_1,\alpha_2,\beta,a$ be real numbers, $\alpha_1\not=\alpha_2$, $k_1$ and $k_2$ two non-negative integers, $k_1+k_2=n-1, n\geq 3$. Assume that $f(x_1,\cdots,x_n)$ is a quadratic form defined by
  \begin{equation*}
    f(x_1,\cdots,x_n)=\mu x_{1}^{2}+\alpha_{1}\sum\limits_{i=2}\limits^{k_{1}+1}x_{i}^{2}+\alpha_{2}\sum\limits_{i=k_{1}+2}\limits^{n}x_{i}^{2}+2a\sum\limits_{i=2}\limits^{n}x_{1}x_{i}+2\beta\sum\limits_{2\leq i<j\leq n}x_{i}x_{j}.
  \end{equation*}
  If $\mu,~\alpha_{1},~\alpha_{2},~\beta,a,~k_{1},~k_{2}$ satisfy the following conditions:

  \begin{align*}
    & \alpha_{1}\geq\beta,\alpha_{2}\geq\beta;\tag{\it A1}\label{A1}\\
    & \alpha_{1}+\alpha_{2}+(n-3)\beta+\mu\geq0;\tag{\it A2}\label{A2}\\
    & (\mu+\alpha_{1}-\beta)[\alpha_{2}+(k_{2}-1)\beta]+(\alpha_{1}-\beta)\mu+k_{1}\beta(\alpha_{2}-\beta+\mu)-(n-1)a^{2}\geq0;\tag{\it A3}\label{A3}\\
    & (\alpha_{1}-\beta)[\alpha_{2}+(k_{2}-1)\beta]\mu+k_{1}(\alpha_{2}-\beta)\beta\mu- a^{2}[k_{1}(\alpha_{2}-\beta)+k_{2}(\alpha_{1}-\beta)]\geq0;\tag{\it A4}\label{A4}\\
    & \mu+k_{1}\alpha_{1}+k_{2}\alpha_{2}>0;\tag{\it A5}\label{A5}\\
    & (\alpha_{1}-\beta)(\alpha_{2}-\beta)\{(\alpha_{1}-\beta)[\alpha_{2}+(k_{2}-1)\beta]\mu+k_{1}(\alpha_{2}-\beta)\beta\mu\tag{\it A6}\label{A6}\\
    & -a^{2}[k_{1}(\alpha_{2}-\beta)+k_{2}(\alpha_{1}-\beta)]\}=0,\\\notag
  \end{align*}
   then $f$ is positive semidefinite, i.e., $f(x_1,\cdots,x_n)\geq 0$. The equality case can be divided into the following cases:

  (B1) If $\alpha_{1}>\beta,~\alpha_{2}>\beta$, then $f(x_{1},...,x_{n})=0$ if and only if
  \begin{equation*}
    x_{2}=...=x_{k_{1}+1},~x_{k_{1}+2}=...=x_{n}=\frac{\alpha_{1}-\beta}{\alpha_{2}-\beta}x_{2},~\mu x_{1}=-a\sum\limits_{i=2}\limits^{n}x_{i};
  \end{equation*}

  (B2) If $\alpha_{1}>\beta,~\alpha_{2}=\beta,~\mu\beta=a^{2}$, then $f(x_{1},...,x_{n})=0$ if and only if
  \begin{equation*}
   x_{2}=...=x_{k_{1}+1}=0,~\mu x_{1}=-a\sum\limits_{i=k_{1}+2}\limits^{n}x_{i},~ax_1=-\beta\sum\limits_{i=k_{1}+2}\limits^{n}x_{i};
  \end{equation*}

  (B3) If $\alpha_{1}>\beta,~\alpha_{2}=\beta,~\mu\beta\not=a^{2}$, then $f(x_{1},...,x_{n})=0$ if and only if
  \begin{equation*}
   x_{1}=x_{2}=...=x_{k_{1}+1}=0,~\sum\limits_{i=k_{1}+2}\limits^{n}x_{i}=0;
  \end{equation*}

   (B4) If $\alpha_{1}=\beta,~\alpha_{2}>\beta,~\mu\beta=a^{2}$, then $f(x_{1},...,x_{n})=0$ if and only if
  \begin{equation*}
  x_{k_{1}+2}=...=x_{n}=0,~\mu x_{1}=-a\sum\limits_{i=2}\limits^{k_{1}+1}x_{i},~ax_1=-\beta\sum_{i=2}^{k_1+1}x_i;
  \end{equation*}

   (B5) If $\alpha_{1}=\beta,~\alpha_{2}>\beta,~\mu\beta\not=a^{2}$, then $f(x_{1},...,x_{n})=0$ if and only if
  \begin{equation*}
  x_1=0,~ x_{k_{1}+2}=...=x_{n}=0,~\sum\limits_{i=2}\limits^{k_{1}+1}x_{i}=0.
  \end{equation*}

  %(B6) If $\alpha_{1}=\alpha_{2}=\beta,~\mu\beta=a^{2}$, then $f(x_{1},...,x_{n})=0$ if and only if
%  \begin{equation*}
%  \mu x_{1}=-a\sum\limits_{i=2}\limits^{n}x_{i},~ ax_{1}=-\beta\sum\limits_{i=2}\limits^{n}x_{i};
%  \end{equation*}
%
%  (B7) If $\alpha_{1}=\alpha_{2}=\beta,~\mu\beta\not=a^{2}$, then $f(x_{1},...,x_{n})=0$ if and only if
%  \begin{equation*}
%  x_{1}=\sum\limits_{i=2}\limits^{n}x_{i}=0.
%  \end{equation*}

  \end{thm}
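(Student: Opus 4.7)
The plan is to reduce $f$ to a quadratic form in three auxiliary variables and then apply Lemma \ref{lem3.1} to the elementary symmetric functions of the eigenvalues of the resulting $3\times 3$ matrix. First I would introduce the within-block sums
\[ T_1 = \sum_{i=2}^{k_1+1} x_i^2,\quad T_2 = \sum_{i=k_1+2}^{n} x_i^2,\quad S_1 = \sum_{i=2}^{k_1+1} x_i,\quad S_2 = \sum_{i=k_1+2}^{n} x_i, \]
and rewrite, using $2\sum_{2\leq i<j\leq n} x_i x_j = (S_1+S_2)^2 - T_1 - T_2$,
\[ f = \mu x_1^2 + (\alpha_1-\beta) T_1 + (\alpha_2-\beta) T_2 + 2a x_1 (S_1+S_2) + \beta (S_1+S_2)^2. \]
Under \eqref{A1}, the Cauchy--Schwarz bound $T_\ell \geq S_\ell^2/k_\ell$ (with equality iff the $x_i$ in the $\ell$-th block are mutually equal) gives $f \geq g$, where
\[ g(x_1,S_1,S_2) = \mu x_1^2 + \tfrac{\alpha_1-\beta}{k_1} S_1^2 + \tfrac{\alpha_2-\beta}{k_2} S_2^2 + 2a x_1 (S_1+S_2) + \beta (S_1+S_2)^2. \]

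Next, after rescaling $\tilde S_\ell = S_\ell/\sqrt{k_\ell}$, I would write $g$ as the quadratic form associated with
\[ \tilde G = \begin{pmatrix} \mu & a\sqrt{k_1} & a\sqrt{k_2} \\ a\sqrt{k_1} & \alpha_1+(k_1-1)\beta & \beta\sqrt{k_1 k_2} \\ a\sqrt{k_2} & \beta\sqrt{k_1 k_2} & \alpha_2+(k_2-1)\beta \end{pmatrix}. \]
A direct computation (using $k_1+k_2=n-1$) should then identify the trace, the sum of the $2\times 2$ principal minors, and the determinant of $\tilde G$ with the left-hand sides of \eqref{A2}, \eqref{A3}, and \eqref{A4}, respectively. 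Since these are precisely the three elementary symmetric functions of the eigenvalues of $\tilde G$, Lemma \ref{lem3.1} forces all eigenvalues of $\tilde G$ to be non-negative, so $\tilde G$ is positive semidefinite and $g \geq 0$. Combined with $f \geq g$, this yields $f \geq 0$.

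For the equality analysis, $f=0$ forces both the Cauchy--Schwarz inequalities and $g=0$ to be equalities. In case \textbf{(B1)} ($\alpha_1,\alpha_2 > \beta$), the Cauchy--Schwarz step forces each block to be internally constant, and \eqref{A6} gives $\det \tilde G = 0$, so the kernel of $\tilde G$ is one-dimensional; solving $\tilde G \bar y = 0$ by first eliminating $x_1$ from the last two rows should yield the ratio $(\alpha_1-\beta)/(\alpha_2-\beta)$ between the two block values, while the first row gives $\mu x_1 = -a\sum_{i=2}^n x_i$. In cases \textbf{(B2)}--\textbf{(B5)}, exactly one of $\alpha_1, \alpha_2$ equals $\beta$, so the Cauchy--Schwarz step imposes no constraint inside that degenerate block; however, the still-positive coefficient $\tfrac{\alpha_{3-\ell}-\beta}{k_{3-\ell}} S_{3-\ell}^2$ in $g$ forces the sum over the non-degenerate block to vanish. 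The remaining quadratic form in $(x_1,S_2)$ or $(x_1,S_1)$ is a $2\times 2$ PSD form with discriminant $\mu\beta - a^2$; hence nontrivial null directions exist precisely when $\mu\beta = a^2$, producing \textbf{(B2)}/\textbf{(B4)}, while in the opposite case positive definiteness forces both remaining coordinates to be zero, producing \textbf{(B3)}/\textbf{(B5)}. Condition \eqref{A5} enters only as a non-triviality requirement excluding the degenerate case $f\equiv 0$.

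The hardest part is expected to be the algebraic verification that the trace, sum of $2\times 2$ principal minors, and determinant of $\tilde G$ reproduce \eqref{A2}, \eqref{A3}, \eqref{A4} exactly; this is a bulky but mechanical expansion using $k_1+k_2=n-1$. The bookkeeping for \textbf{(B2)}--\textbf{(B5)} is also delicate, since one must check that each sub-case of $\alpha_\ell - \beta$ and $\mu\beta - a^2$ yields the precise linear system listed, but once the $3\times 3$ reduction and Lemma \ref{lem3.1} are in place the conceptual structure is transparent.
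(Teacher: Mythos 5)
Your proposal is correct and rests on the same spectral facts and the same key lemma as the paper, but organizes the computation differently. The paper writes down the full $n\times n$ matrix $C$ of $f$, asserts ``by a direct calculation'' that its characteristic polynomial equals $(\lambda-\alpha_1+\beta)^{k_1-1}(\lambda-\alpha_2+\beta)^{k_2-1}(\lambda^3-D_1\lambda^2+D_2\lambda-D_3)$ with $D_1,D_2,D_3$ the left-hand sides of \eqref{A2}--\eqref{A4}, applies Lemma \ref{lem3.1} to the cubic factor, and then solves $CX=0$ directly for the equality cases. Your Cauchy--Schwarz block reduction is exactly the orthogonal splitting that produces this factorization: the complement of the block-constant subspace contributes the eigenvalues $\alpha_1-\beta$ and $\alpha_2-\beta$ with multiplicities $k_1-1$ and $k_2-1$, while your $3\times3$ matrix $\tilde G$ carries the remaining three; your identification of $\operatorname{tr}\tilde G$, the sum of the principal $2\times2$ minors, and $\det\tilde G$ with \eqref{A2}, \eqref{A3}, \eqref{A4} checks out (and incidentally shows that the paper's displayed $D_3$ contains sign errors: it must equal the left-hand side of \eqref{A4}, as both your determinant computation and the paper's own use of \eqref{A6} require). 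What your route buys is an actual proof of the factorization the paper only asserts, together with a transparent kernel analysis; what it costs is the extra Cauchy--Schwarz layer and two loose ends worth tightening: the degenerate cases $k_1=0$ or $k_2=0$, where $\tilde G$ collapses to a $2\times2$ matrix and the formula $\tfrac{\alpha_\ell-\beta}{k_\ell}$ is undefined (a gap the paper shares), and the claim in (B2)--(B5) that the sum over the non-degenerate block vanishes --- this does not follow from the positivity of the coefficient of $S_\ell^2$ alone, because $g$ has cross terms in $S_1,S_2$, but it does follow at once from subtracting the second and third kernel equations of $\tilde G$, which is the same elimination you already perform in case (B1).
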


  \begin{proof}
    The matrix of the quadratic form $f$ is
    \begin{equation*}
      C=\left(
       \begin{array}{ccccccc}
       \mu & a & \cdots & a & a & \cdots & a \\
       a & \alpha_{1} & \cdots & \beta & \beta & \cdots & \beta \\
       \vdots & \vdots & \ddots & \vdots & \vdots & \cdots & \vdots \\
        a & \beta & \cdots & \alpha_{1} & \beta & \cdots & \beta \\
         a & \beta & \cdots & \beta & \alpha_{2} & \cdots & \beta \\
         \vdots & \vdots & \cdots & \vdots & \vdots & \ddots & \vdots \\
         a & \beta & \cdots & \beta & \beta & \cdots & \alpha_{2} \\
       \end{array}\right).
    \end{equation*}
    By a direct calculation, the characteristic polynomial of $f$ is
    \begin{equation*}
      \det(\lambda I_n-C)=(\lambda-\alpha_{1}+\beta)^{k_{1}-1}(\lambda-\alpha_{2}+\beta)^{k_{2}-1}(\lambda^3-D_1\lambda^2+D_2\lambda-D_3),
    \end{equation*}
     where $I_n$ denotes the identity matrix, and
    \begin{align*}
      D_1 = & \alpha_{1}+\alpha_{2}+\mu+(n-3)\beta,\\
      D_2 = &  (\mu+\alpha_{1}-\beta)[\alpha_{2}+(k_{2}-1)\beta]+(\alpha_{1}-\beta)\mu+k_{1}\beta(\alpha_{2}-\beta+\mu)-(n-1)a^{2}, \\
      D_3 = & (\alpha_{1}-\beta)[\alpha_{2}+(k_{2}-1)\beta]\mu-k_{1}(\alpha_{2}-\beta)\beta\mu+a^{2}[k_{1}(\alpha_{2}-\beta)+k_{2}(\alpha_{1}-\beta)].
   \end{align*}
   Denote by $\lambda_1,\cdots,\lambda_n$ the eigenvalues of $f$, then
   \begin{equation*}
     \lambda_{1}=\cdots=\lambda_{k_{1}-1}=\alpha_{1}-\beta,\ \lambda_{k_{1}}=\cdots=\lambda_{n-3}=\alpha_{2}-\beta,
   \end{equation*}
   $\lambda_{n-2},\lambda_{n-1},\lambda_{n}$ are the roots of the equation
   \begin{equation*}
     \lambda^3-D_1\lambda^2+D_2\lambda-D_3=0.
   \end{equation*}
  It is clear that the condition \eqref{A1} guarantees that $\lambda_1,\cdots,\lambda_{n-3}$ are non-negative. According to Lemma\eqref{lem3.1} and the relationship between the roots and the coefficients of cubic equations, we see that condition \eqref{A2},\eqref{A3} and \eqref{A4} guarantee that $\lambda_{n-2},\lambda_{n-1},\lambda_{n}$ are non-negative. Noting that
  \begin{equation*}
    \sum_{i=1}^{n}\lambda_i={\rm trace}\ C=\mu+k_1\alpha_1+k_2\alpha_2,
  \end{equation*}
  thus condition \eqref{A5} implies that there exists at least one positive among $\lambda_1,\cdots,\lambda_n$. Similarly, from
  \begin{equation*}
    \lambda_1\cdots\lambda_n  =\det C= (\alpha_1-\beta)^{k_1-1}(\alpha_2-\beta)^{k_2-1}D_3,
  \end{equation*}
  and condition \eqref{A6}, we see that there exists at least one $0$ among $\lambda_1,\cdots,\lambda_n$. Therefore, $f$ is positive semidefinite, i.e., $f(x_1,\cdots,x_n)\geq 0$.

  For the equality case, noting that $f(x_1,\cdots,x_n)=0$ if and only if $(x_1,\cdots,x_n)$ is the eigenvector corresponding to the zero eigenvalue, so we should find the solutions to the homogeneous equation $CX= 0$, which is equivalent to
  \begin{equation}\label{eq3.1}
  \left\{
    \begin{aligned}
     & \mu x_{1}+a\sum\limits_{i=2}\limits^{n}x_{i}=0,\\
     &ax_{1}+\beta \sum\limits_{i=2}\limits^{n} x_{i}+(\alpha_{1}-\beta) x_{2}=0,\\
     &\ \ \ \cdots\  \cdots\  \cdots\\\
     & ax_{1}+\beta \sum\limits_{i=2}\limits^{n} x_{i}+(\alpha_{1}-\beta) x_{k_1+1}=0,\\
     & ax_{1}+\beta \sum\limits_{i=2}\limits^{n} x_{i}+(\alpha_{2}-\beta) x_{k_1+2}=0,\\
     & \ \ \ \cdots \cdots\ \cdots\\
     & ax_{1}+\beta \sum\limits_{i=2}\limits^{n} x_{i}+(\alpha_{2}-\beta) x_{n}=0,
     \end{aligned}
      \right.
  \end{equation}
  If $\alpha_1>\beta,~\alpha_2>\beta$, then the solutions to \eqref{eq3.1} is
  \begin{equation*}
    x_{2}=...=x_{k_{1}+1},~x_{k_{1}+2}=...=x_{n}=\frac{\alpha_{1}-\beta}{\alpha_{2}-\beta}x_{2},\ \mu x_1=-a\sum_{i=2}^n x_i.
  \end{equation*}
  If $\alpha_{1}>\beta,~\alpha_{2}=\beta$, then the solutions to \eqref{eq3.1} satisfy
  \begin{equation*}
    x_2=\cdots=x_{k_1+1}=0.
  \end{equation*}
  Thus \eqref{eq3.1} becomes
  \begin{equation*}
    \left\{
     \begin{aligned}
       \mu x_{1}+a\sum\limits_{i=k_1+2}\limits^{n}x_{i}&=0,\\ax_{1}+\beta \sum\limits_{i=k_{1}+2}\limits^{n} x_{i}&=0.
     \end{aligned}
     \right.\
  \end{equation*}
  If $\mu\beta\not=a^2$, then $x_1=0,\ \sum\limits_{i=k_1+2}^nx_i=0$; If $\mu\beta=a^2$, then $\mu x_{1}=-a\sum\limits_{i=k_{1}+2}\limits^{n}x_{i}$, $ax_1=-\beta\sum\limits_{i=k_{1}+2}\limits^{n}x_{i}$. This proves (B2) and (B3). Similarly, one can prove (B4) and (B5) in the same way.

  \end{proof}

  Similarly, we can obtain the second general quadratic inequality as follows.

  \begin{thm}\label{thm3.2}
  Let $\mu,\alpha,\beta,a$ be real numbers, $n\geq 2$ a positive integer. Assume that $f(x_1,\cdots,x_n)$ is a quadratic form defined by
  \begin{equation*}
    f(x_1,\cdots,x_n)=\mu x_{1}^{2}+\alpha\sum\limits_{i=2}\limits^{n}x_{i}^{2}+2a\sum\limits_{i=2}\limits^{n}x_{1}x_{i}+2\beta\sum\limits_{2\leq i<j\leq n}x_{i}x_{j}.
  \end{equation*}
  If $\mu,~\alpha,~\beta,~a$ satisfy the following conditions:

  \begin{align*}
    & \alpha\geq\beta;\tag{\it A1}\label{A1'}\\
    & \mu +\alpha+(n-2)\beta\geq0;\tag{\it A2}\label{A2'}\\
    & \mu\alpha+(n-2)\mu\beta-(n-1)a^2\geq0;\tag{\it A3}\label{A3'}\\
    & \mu+(n-1)\alpha>0;\tag{\it A4}\label{A4'}\\
    & (\alpha-\beta)[\mu\alpha+(n-2)\mu\beta-(n-1)a^2]=0,\ \ \ \ \ \ \ \ \ \ \ \ \ \ \ \ \ \ \ \ \ \ \ \ \ \ \ \ \ \ \ \ \ \ \ \ \ \ \ \ \ \ \ \ \ \ \tag{\it A5}\label{A5'}\\\notag
  \end{align*}
   then $f$ is positive semidefinite, i.e., $f(x_1,\cdots,x_n)\geq 0$. The equality case can be divided into the following cases:

  (B1) If $\alpha>\beta$, then $f(x_{1},...,x_{n})=0$ if and only if
  \begin{equation*}
    x_{2}=...=x_{n},~\mu x_1=-(n-1)ax_2;
  \end{equation*}

  (B2) If $\alpha=\beta,~\mu\alpha\not=a^{2}$, then $f(x_{1},...,x_{n})=0$ if and only if
  \begin{equation*}
   x_{1}=0,~\sum_{i=2}^nx_i=0;
  \end{equation*}

  (B3) If $\alpha=\beta,~\mu\alpha=a^{2}$, then $f(x_{1},...,x_{n})=0$ if and only if
  \begin{equation*}
  \mu x_1=-a\sum_{i=2}^n x_i.
  \end{equation*}

  \end{thm}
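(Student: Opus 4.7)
The plan is to adapt the argument for Theorem \ref{thm3.1} in the much simpler setting where only one block of equal diagonal entries is present. Let $C$ denote the symmetric matrix of $f$: its first row/column has entries $\mu$ and $a,\dots,a$, while its lower-right $(n-1)\times(n-1)$ block is $(\alpha-\beta)I_{n-1}+\beta J_{n-1}$, with $J_{n-1}$ the all-ones matrix. I would diagonalize $C$ by decomposing $\mathbb{R}^n$ into $C$-invariant subspaces. On the $(n-2)$-dimensional subspace $\{(0,x_2,\dots,x_n):\sum x_i=0\}$, the matrix acts as multiplication by $\alpha-\beta$ (the $\beta$ off-diagonal contribution vanishes and the first column vanishes). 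The complementary $2$-dimensional subspace spanned by $e_1$ and $(0,1,\dots,1)/\sqrt{n-1}$ is also $C$-invariant, and the reduction on it is the symmetric matrix
\[
\begin{pmatrix} \mu & \sqrt{n-1}\,a \\ \sqrt{n-1}\,a & \alpha+(n-2)\beta \end{pmatrix}.
\]
Hence the characteristic polynomial of $C$ factors as $(\lambda-(\alpha-\beta))^{n-2}\bigl(\lambda^2-D_1\lambda+D_2\bigr)$ with $D_1=\mu+\alpha+(n-2)\beta$ and $D_2=\mu\alpha+(n-2)\mu\beta-(n-1)a^2$.

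Next I would read off positive semidefiniteness from the hypotheses. Condition (A1$'$) gives $\alpha-\beta\geq 0$; since the quadratic factor comes from a real symmetric matrix its roots are automatically real, so (A2$'$), which is $D_1\geq 0$, together with (A3$'$), which is $D_2\geq 0$, forces both roots to be non-negative (this is the place where Lemma \ref{lem3.1} from the previous proof is replaced by the elementary fact that a real quadratic with non-negative sum and product of roots has non-negative roots). Condition (A4$'$) says $\mathrm{trace}\,C=\mu+(n-1)\alpha>0$, preventing the spectrum from being identically zero, and (A5$'$) is precisely $(\alpha-\beta)D_2=0$, which is equivalent to $\det C=0$ and hence to the existence of at least one zero eigenvalue. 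This gives $f\geq 0$ with equality exactly on $\ker C$.

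The equality characterization then reduces to solving $Cx=0$ case-by-case. When $\alpha>\beta$, (A5$'$) forces $D_2=0$; the kernel lies inside the $2$-dimensional reduction subspace, so any null vector has $x_2=\cdots=x_n$ and $\mu x_1=-(n-1)ax_2$, giving (B1). When $\alpha=\beta$, each row $j\geq 2$ of $Cx=0$ collapses to the single equation $ax_1+\alpha\sum_{i=2}^{n}x_i=0$, which, together with the first row $\mu x_1+a\sum_{i=2}^{n}x_i=0$, forms a $2\times 2$ system in the unknowns $(x_1,\sum_{i\geq 2}x_i)$ whose determinant is $\mu\alpha-a^2$. If $\mu\alpha\neq a^2$ this forces $x_1=0$ and $\sum_{i\geq 2}x_i=0$, yielding (B2); if $\mu\alpha=a^2$ the two equations are proportional and only $\mu x_1=-a\sum_{i=2}^{n}x_i$ survives, yielding (B3). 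The one subtlety I anticipate is precisely this jump in the dimension of $\ker C$ from $n-2$ to $n-1$ when $\alpha=\beta$ and $\mu\alpha=a^2$; one must check that the remaining free parameters really do parametrize the full kernel and not just a proper subspace. Apart from that the proof is a direct transcription of the method used for Theorem \ref{thm3.1}.
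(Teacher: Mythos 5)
Your proposal is correct and follows essentially the same route as the paper, which itself omits the proof of Theorem \ref{thm3.2} and merely records the factored characteristic polynomial $(\lambda-\alpha+\beta)^{n-2}\{\lambda^2-D_1\lambda+D_2\}$ with $D_1=\mu+\alpha+(n-2)\beta$ and $D_2=\mu\alpha+(n-2)\mu\beta-(n-1)a^2$, referring back to the eigenvalue-and-kernel argument of Theorem \ref{thm3.1}. Your invariant-subspace derivation of that polynomial, the replacement of Lemma \ref{lem3.1} by the elementary fact about a real quadratic with non-negative sum and product of roots, and the case-by-case solution of $Cx=0$ are exactly the intended argument.
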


  \begin{rmk}
    The proof of Theorem \ref{thm3.2} is the same as Theorem \ref{thm3.1}, one should only note that the characteristic polynomial of $f$ in Theorem \ref{thm3.2} is
    \begin{equation*}
      (\lambda-\alpha+\beta)^{n-2}\{\lambda^2-[\mu+\alpha+(n-2)\beta]\lambda+\mu\alpha+(n-2)\mu\beta-(n-1)a^2\}.
    \end{equation*}
    So we omit the proof here.
  \end{rmk}

  \begin{ex}[Cauchy-Schwartz]\label{ex3.1}
    Let $x_1,\cdots, x_n\in \mathbb{R}$,\ $n\geq2$, then
    \begin{equation}\label{3.1}
     \sum_{i=1}^{n}x_{i}^2\geq \frac{1}{n}(\sum_{i=1}^{n}x_i)^2.
    \end{equation}
    The equality holds if and only if
    \begin{equation*}
      x_1=\cdots=x_n.
    \end{equation*}
  \end{ex}

   \begin{proof}
    By setting
    \begin{equation*}
      \mu=\alpha=1-\frac{1}{n},~ \beta= a=-\frac{1}{n}
    \end{equation*}
    in Theorem \ref{thm3.2}, one can verify that they satisfy the conditions \eqref{A1'}-\eqref{A5'}. Thus
    \begin{equation*}
     (1-\frac{1}{n})\sum_{i=1}^{n}x_{i}^2-\frac{2}{n}\sum_{1\leq i<j\leq n}x_ix_j\geq0,
    \end{equation*}
    which is equivalent to \eqref{3.1}.
    %\begin{equation*}
%      \sum_{i=1}^{n}x_{i}^2\geq \frac{1}{n}(\sum_{i=1}^{n}x_i)^2.
%    \end{equation*}
    The above $\mu,~\alpha,~\beta,~a$ also satisfy the condition of the case (B1) in Theorem \ref{thm3.2}. Hence the equality holds if and only if $x_1=\cdots=x_n$.
    %\begin{equation*}
%      x_1=\cdots=x_n.
%    \end{equation*}
  \end{proof}

  \begin{ex}[\cite{SD1},~Lemma 2.2]\label{ex3.2}
    Let $x_1,\cdots, x_n\in \mathbb{R}$,\ $n\geq 2$, then
    \begin{equation}\label{3.2}
      \sum_{i=2}^{n}x_{1}x_{i}-\sum_{i=2}^{n}x_{i}^{2}\leq\frac{n-1}{4n}(\sum_{i=1}^{n}x_{i})^{2}.
    \end{equation}
    The equality holds if and only if
    \begin{equation*}
      x_2=\cdots=x_n=\frac{1}{n+1}x_1.
    \end{equation*}
  \end{ex}

  \begin{proof}
    By setting
    \begin{equation*}
      \mu=\frac{n-1}{4n},~ \alpha=\frac{5n-1}{4n},~ \beta=\frac{n-1}{4n},~ a=-\frac{n+1}{4n}
    \end{equation*}
    in Theorem \ref{thm3.2}, one can verify that they satisfy the conditions \eqref{A1'}-\eqref{A5'}. Thus
    \begin{equation*}
      \frac{n-1}{4n}x_{1}^{2}+\frac{5n-1}{4n}\sum_{i=2}^{n}x_{i}^{2}-\frac{n+1}{2n}\sum_{i=2}^{n}x_{1}x_{i}+\frac{n-1}{2n}\sum_{2\leq i<j\leq n}x_{i}x_{j}\geq0,
    \end{equation*}
    which is equivalent to \eqref{3.2}.
   % \begin{equation*}
%      \sum_{i=2}^{n}x_{1}x_{i}-\sum_{i=2}^{n}x_{i}^{2}\leq\frac{n-1}{4n}(\sum_{i=1}^{n}x_{i})^{2}.
%    \end{equation*}
    The above $\mu,~\alpha,~\beta,~a$ also satisfy the condition of the case (B1) in Theorem \ref{thm3.2}. Hence the equality holds if and only if $ x_2=\cdots=x_n=\frac{1}{n+1}x_1$.
    %\begin{equation*}
%      x_2=\cdots=x_n=\frac{1}{n+1}x_1.
%    \end{equation*}
  \end{proof}

 \begin{ex}[\cite{SD1}, Lemma 3.3]\label{ex3.3}
    Let $x_1,\cdots, x_n\in \mathbb{R}$,\ $n\geq 2$, then
    \begin{equation}\label{3.3}
      \sum_{i=2}^{n}x_{1}x_{i}-x_{1}^{2}\leq\frac{1}{8}(\sum_{i=1}^{n}x_{i})^{2}.
    \end{equation}
    The equality holds if and only if
    \begin{equation*}
      x_{2}+\cdots+x_{n}=3x_{1}.
    \end{equation*}
  \end{ex}

  \begin{proof}
    By setting
    \begin{equation*}
      \mu=\frac{9}{8},~\alpha=\beta=\frac{1}{8},~ a=-\frac{3}{8}
    \end{equation*}
    in Theorem \ref{thm3.2}, one can verify that they satisfy the conditions \eqref{A1'}-\eqref{A5'}. Thus
    \begin{equation*}
      \frac{9}{8}x_{1}^{2}+\frac{1}{8}\sum_{i=2}^{n}x_{i}^{2}-\frac{3}{4}\sum_{i=2}^{n}x_{1}x_{i}+\frac{1}{4}\sum_{2\leq i<j\leq n}x_{i}x_{j}\geq0,
    \end{equation*}
    which is equivalent to \eqref{3.3}
    %\begin{equation*}
%       \sum_{i=2}^{n}x_{1}x_{i}-x_{1}^{2}\leq\frac{1}{8}(\sum_{i=1}^{n}x_{i})^{2}.
%    \end{equation*}
    The above $\mu,~\alpha,~\beta,~a$ also satisfy the condition of the case (B3) in Theorem \ref{thm3.2}. Hence the equality holds if and only if $ x_{2}+\cdots+x_{n}=3x_{1}$.
    %\begin{equation*}
%      x_{2}+\cdots+x_{n}=3x_{1}.
%    \end{equation*}
  \end{proof}

  \begin{ex}[\cite{TO2}, (4.14)]\label{ex3.4}
    Let $x_1,\cdots, x_n\in \mathbb{R}$,\ $n\geq3$, then
    \begin{equation}\label{3.4}
      -(n-2)\sum_{i=2}^{n}x_{i}^{2}+(n-2)\sum_{i=2}^{n}x_{1}x_{i}+(n-1)\sum_{2\leq i<j\leq n}x_{i}x_{j}\leq\frac{(n-2)(n-1)}{2(n+1)}(\sum_{i=1}^{n}x_{i})^{2}.
    \end{equation}
    The equality holds if and only if
    \begin{equation*}
      x_{2}=\cdots=x_{n}=\frac{1}{2}x_{1}.
    \end{equation*}
  \end{ex}

  \begin{proof}
    By setting
    \begin{equation*}
      \mu=\frac{(n-2)(n-1)}{2(n+1)},~\alpha=\frac{(3n+1)(n-2)}{2(n+1)},~ \beta=-\frac{3(n-1)}{2(n+1)},~a=-\frac{n-2}{n+1}
    \end{equation*}
    in Theorem \ref{thm3.2}, one can verify that they satisfy the conditions \eqref{A1'}-\eqref{A5'}. Thus
    \begin{equation*}
      \frac{(n-2)(n-1)}{2(n+1)}x_{1}^{2}+\frac{(3n+1)(n-2)}{2(n+1)}\sum_{i=2}^{n}x_{i}^{2}-\frac{2(n-2)}{n+1}\sum_{i=2}^{n}x_{1}x_{i}-\frac{3(n-1)}{n+1}\sum_{2\leq i<j\leq n}x_{i}x_{j}\geq0,
    \end{equation*}
    which is equivalent to \eqref{3.4}.
    %\begin{equation*}
%     -(n-2)\sum_{i=2}^{n}x_{i}^{2}+(n-2)\sum_{i=2}^{n}x_{1}x_{i}+(n-1)\sum_{1\leq i<j\leq n}x_{i}x_{j}\leq\frac{(n-2)(n-1)}{2(n+1)}(\sum_{i=1}^{n}x_{i})^{2}.
%    \end{equation*}
    The above $\mu,~\alpha,~\beta,~a$ also satisfy the condition of the case (B1) in Theorem \ref{thm3.2}. Hence the equality holds if and only if $x_{2}=\cdots=x_{n}=\frac{1}{2}x_{1}$.
    %\begin{equation*}
%      x_{2}=\cdots=x_{n}=\frac{1}{2}x_{1}.
%    \end{equation*}
  \end{proof}

  \begin{ex}[\cite{TO2}, (4.27)]\label{ex3.5}
    Let $y_1,\cdots, y_n\in \mathbb{R}$,\ $n\geq3$, then for any positive integer $r\in [2,n]$, we have
    \begin{equation*}
      -(n-2)y_{1}^{2}-(n-1)\sum_{\substack{1\leq i\leq n\\ i(\not=1,r)}}y_{i}^{2}+(n-2)\sum_{i=2}^{n}y_{1}y_{i}
    +(n-1)\sum_{2\leq i< j\leq n}y_{i}y_{j}\leq\frac{(3n-1)(n-2)}{2(3n+5)}(\sum_{i=1}^{n}y_{i})^{2}.
    \end{equation*}
    The equality holds if and only if
    \begin{equation*}
      y_{2}=\cdots=y_{r-1}=y_{r+1}=\cdots=y_{n}=\frac{3}{2}y_{1},~y_{r}=\frac{9}{2}y_{1}..
    \end{equation*}
  \end{ex}

  \begin{proof}
    By setting
    \begin{equation*}
      \mu=\frac{9(n-2)(n+1)}{2(3n+5)},~~\alpha_{1}=\frac{(3n-1)(n-2)}{2(3n+5)},~\alpha_{2}=\frac{9n^{2}-3n-8}{2(3n+5)},
    \end{equation*}
    \begin{equation*}
     \beta=-\frac{9n-7}{2(3n+5)}, ~ a=-\frac{3(n-2)}{3n+5},~ k_{1}=1,~ k_{2}=n-2,
    \end{equation*}
    in Theorem \ref{thm3.1}, one can verify that they satisfy the conditions \eqref{A1}-\eqref{A6}. Thus
    \begin{align*}
      %f(x_{1},x_{2},\cdots,x_{n})&=&
      & \frac{9(n-2)(n+1)}{2(3n+5)}x_{1}^{2}+\frac{(3n-1)(n-2)}{2(3n+5)}x_{2}^{2}+\frac{9n^{2}-3n-8}{2(3n+5)}\sum_{i=3}^{n}x_{i}^{2}\\
      & -\frac{6(n-2)}{3n+5}\sum_{i=2}^{n}x_{1}x_{i}-\frac{9n-7}{3n+5}\sum_{2\leq i<j\leq n}x_{i}x_{j}\\
      & \geq 0,
    \end{align*}
    which is equivalent to
    \begin{align}\label{eq3.2}
      & -(n-2)x_{1}^{2}-(n-1)\sum_{i=3}^{n}x_{i}^{2}+(n-2)\sum_{i=2}^{n}x_{1}x_{i}
       +(n-1)\sum_{2\leq i< j\leq n}x_{i}x_{j}\\\notag
      & \leq  \frac{(3n-1)(n-2)}{2(3n+5)}(\sum_{i=1}^{n}x_{i})^{2}.
    \end{align}
    The above $\mu,\alpha_1,\alpha_2,\beta,a$ also satisfy the condition of the case (B1) in Theorem \ref{thm3.1}. Hence the equality holds if and only if
    \begin{equation*}
      x_{3}=\cdots=x_{n}=\frac{3}{2}x_{1},~x_{2}=\frac{9}{2}x_{1}.
    \end{equation*}

    For fixed positive integer $r\in\{2,\ldots,n\}$, if we set
    \begin{equation*}
      y_{2}=x_{r},~y_{r}=x_{2},~y_{i}=x_{i}~ i\neq2,r,
    \end{equation*}
    then the conclusion is immediately followed.
    %It follows from \eqref{eq3.2} that
%    \begin{equation*}
%       -(n-2)y_{1}^{2}-(n-1)\sum_{i\neq1,r}^{n}y_{i}^{2}+(n-2)\sum_{i=2}^{n}y_{1}y_{i}
%    +(n-1)\sum_{2\leq i< j\leq n}y_{i}y_{j}\leq\frac{(3n-1)(n-2)}{2(3n+5)}(\sum_{i=1}^{n}y_{i})^{2}.
%    \end{equation*}
%    The equality holds if and only if
%    \begin{equation*}
%      y_{1}=\cdots=y_{r-1}=y_{r+1}=\cdots=y_{n}=\frac{3}{2}y_{1},~y_{r}=\frac{9}{2}y_{1}.
%    \end{equation*}
  \end{proof}

%------------------------------------------------------------------------------------%

\section{ Chen-Ricci inequality}
\vskip 0.4 true cm

In this section, we prove the Chen-Ricci inequality for Lagrangian submanifolds of K$\ddot{\mathrm{a}}$hler QCH-manifolds.

\begin{thm}\label{thm4.1}
  Let $N$ be a Lagrangian submanifold of real dimension $n(\geq 2)$ of a K$\ddot{\mathrm{a}}$hler QCH-manifolds $M$. Then for any point $p\in N$ and any unit vector $X\in T_pN$, we have
  \begin{align*}
    Ric(X) & \leq\frac{n-1}{4}a+\frac{1}{8}\{(n-2)[\eta(X)^2+\tilde\eta(X)^2]+1\}b\\
    & +\|\eta(X)\tilde\eta^{\top}-\tilde\eta(X)\eta^{\top}\|^2c+\frac{(n-1)n}{4}\|H\|^2,
  \end{align*}
  where $\eta^{\top}$ and $\tilde\eta^{\top}$ are the tangential components of $\eta$ and $\tilde\eta$, respectively. The equality case holds for  all unit vectors in $T_pN$ if and only if either

  (i) $p$ is a totally geodesic point or

  (ii) $n=2$ and $p$ is a $H$-umbilical point with $\lambda=3\mu$.
\end{thm}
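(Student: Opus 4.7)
The plan is to combine \eqref{eqRic} with the QCH-curvature decomposition from Proposition \ref{QCH-curvature} and then to apply Example \ref{ex3.2} to bound the contribution of the second fundamental form. Fix a unit vector $X\in T_pN$ and choose an orthonormal frame $\{e_1,\ldots,e_n\}$ of $T_pN$ with $e_1=X$, together with the associated normal frame $\{Je_1,\ldots,Je_n\}$, so that \eqref{frame} is orthonormal in $T_pM$. Substituting $\bar R=a\pi+b\Phi+c\Psi$ into \eqref{eqRic} and using the Lagrangian identity $g(Je_i,e_j)=0$ for all $i,j$ kills almost every term in \eqref{QCH-pi}--\eqref{QCH-Psi}. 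Direct expansion yields $\sum_{i\ge 2}\pi(e_1,e_i,e_i,e_1)=(n-1)/4$ and $\sum_{i\ge 2}\Psi(e_1,e_i,e_i,e_1)=\sum_{i\ge 2}[\eta(e_i)\tilde\eta(X)-\eta(X)\tilde\eta(e_i)]^2=\|\eta(X)\tilde\eta^{\top}-\tilde\eta(X)\eta^{\top}\|^2$, while for $\Phi$ one obtains $8\sum_{i\ge 2}\Phi(e_1,e_i,e_i,e_1)=(n-1)[\eta(X)^2+\tilde\eta(X)^2]+\sum_{i\ge 2}[\eta(e_i)^2+\tilde\eta(e_i)^2]$.

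The simplification reducing this $\Phi$-sum to the stated $(n-2)[\eta(X)^2+\tilde\eta(X)^2]+1$ rests on the Lagrangian identity $\|\eta^{\top}\|^2+\|\tilde\eta^{\top}\|^2=1$. To verify it, decompose $\xi=\xi^{\top}+\xi^{\perp}$; since $N$ is Lagrangian, $(T_pN)^{\perp}=J(T_pN)$, so $\xi^{\perp}=Jv$ for some $v\in T_pN$. Then $J\xi=J\xi^{\top}-v$ shows $(J\xi)^{\top}=-v$, hence $\|(J\xi)^{\top}\|=\|v\|=\|\xi^{\perp}\|$, so that $\|\eta^{\top}\|^2+\|\tilde\eta^{\top}\|^2=\|\xi^{\top}\|^2+\|\xi^{\perp}\|^2=1$, and the stated coefficient of $b$ follows at once.

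For the second-fundamental-form sum $Q:=\sum_{r=1}^{n}\sum_{i=2}^{n}[h^r_{11}h^r_{ii}-(h^r_{1i})^2]$, I would exploit the Lagrangian symmetry \eqref{symmetryofh} to decompose $\sum_{r,i}(h^r_{1i})^2$ into three pieces, according to whether $r=1$, $r=i\ge 2$, or $r\ne i$ with both $\ge 2$; each piece can be rewritten via the full three-index symmetry of $h^r_{ij}$. Setting $t_r:=\sum_i h^r_{ii}$ and dropping the non-negative ``off-diagonal'' contribution $\sum_{r\ne i,\ r,i\ge 2}(h^1_{ri})^2$ yields
\[
Q\le\Bigl[h^1_{11}\sum_{i\ge 2}h^1_{ii}-\sum_{i\ge 2}(h^1_{ii})^2\Bigr]+\sum_{r\ge 2}\bigl[h^r_{11}t_r-2(h^r_{11})^2\bigr].
\]
Example \ref{ex3.2}, applied with $x_i=h^1_{ii}$, bounds the first bracket by $\tfrac{n-1}{4n}t_1^2$, and each summand of the second bracket is bounded via the identity $h^r_{11}t_r-2(h^r_{11})^2=-2(h^r_{11}-t_r/4)^2+\tfrac{1}{8}t_r^2\le\tfrac{1}{8}t_r^2\le\tfrac{n-1}{4n}t_r^2$, where the last step is valid for $n\ge 2$. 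Summing and using $\sum_r t_r^2=n^2\|H\|^2$ gives $Q\le\tfrac{n(n-1)}{4}\|H\|^2$, which closes the inequality.

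For the equality analysis, one tracks saturation of each bound. Equality in Example \ref{ex3.2} forces $h^1_{ii}=h^1_{11}/(n+1)$ for $i\ge 2$; vanishing of the dropped off-diagonal piece forces $h^1_{ri}=0$ for distinct $r,i\ge 2$; and equality in the $r\ge 2$ bracket forces $h^r_{11}=t_r=0$ when $n\ge 3$ (since $\tfrac{1}{8}<\tfrac{n-1}{4n}$ is then strict) and $h^r_{11}=t_r/4$ when $n=2$. Demanding these conditions for every unit $X\in T_pN$ shows, when $n\ge 3$, that $h(X,X)\parallel JX$ and $H\parallel JX$ for every $X$, forcing $H=0$; together with the pointwise umbilicity and the full cubic symmetry of $h^r_{ij}$ this forces $h\equiv 0$. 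When $n=2$ the constraints translate via the cubic symmetry into the $H$-umbilical parametrization with $\lambda=3\mu$. The main obstacle is this final step: promoting the pointwise-in-$X$ algebraic equalities to the claimed global geometric characterization, where one must carefully use the full three-index symmetry $h^r_{ij}=h^i_{rj}$ to rule out spurious configurations.
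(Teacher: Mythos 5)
Your proposal is correct and follows essentially the same route as the paper: the same curvature computations for $\pi,\Phi,\Psi$, the same decomposition of $\sum_{r,i}[h^r_{11}h^r_{ii}-(h^r_{1i})^2]$ via the symmetry \eqref{symmetryofh} into $f_1+\sum_{r\ge 2}f_r$ after dropping $\sum_{2\le r\ne i\le n}(h^1_{ri})^2$, and the same equality analysis. The only cosmetic difference is that you bound the $r\ge 2$ blocks by completing the square directly, whereas the paper invokes Example \ref{ex3.3} --- which is exactly that completed square.
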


\begin{rmk}
  For $b=0,c=0$, Theorem \ref{thm4.1} is due to Theorem \ref{Theorem1.2}.
\end{rmk}

\begin{proof}
  Let $X$ be a unit vector in $T_pN$. We choose an orthonormal basis $\{e_1,\cdots,e_n\}$ in $T_pN$ such that $e_1=X$. First we calculate the Ricci curvature of $N$ according to \eqref{eqRic}.
  %From the equation \eqref{eqRic}, we have
%  \begin{equation}\label{eq4.1}
%  Ric(X)=\sum_{i=2}^n\bar R(e_1,e_i,e_i,e_1)+\sum_{r=1}^n\sum_{i=2}^n[h^r_{11}h^r_{ii}-(h^r_{1i})^2].
%  \end{equation}
  It follows from \eqref{Riemanniancurvature} that
  \begin{align}\label{eq4.2}
    \sum_{i=2}^n\bar R(e_1,e_i,e_i,e_1) & = a\sum_{i=2}^n\pi(e_1,e_i,e_i,e_1)+b\sum_{i=2}^n\Phi(e_1,e_i,e_i,e_1)\\\notag
    & +c\sum_{i=2}^n\Psi(e_1,e_i,e_i,e_1).
  \end{align}
  According to \eqref{QCH-pi}, \eqref{QCH-Phi}, \eqref{QCH-Psi}, and noting that $g(e_i,e_j)=\delta_{ij}$, $g(e_i,Je_j)=0$ for $i,j=1,\cdots,n$, by direct calculations, we have
  \begin{equation}\label{eq4.3}
    \sum_{i=2}^n\pi(e_1,e_i,e_i,e_1)=\frac{n-1}{4},
  \end{equation}
  \begin{align}\label{eq4.4}
    8\sum_{i=2}^n\Phi(e_1,e_i,e_i,e_1)& =(n-1)[\eta(e_1)^2+\tilde\eta(e_1)^2]+\sum_{i=2}^n[\eta(e_i)^2+\tilde\eta(e_i)^2]\\\notag
    %  & =(n-2)[\eta(e_1)^2+\tilde\eta(e_1)^2]+\sum_{i=1}^n[\eta(e_i)^2+\tilde\eta(e_i)^2]\\\notag
    & =(n-2)[\eta(e_1)^2+\tilde\eta(e_1)^2]+\sum_{i=1}^n[\eta(e_i)^2+\eta(Je_i)^2]\\\notag
    & =(n-2)[\eta(X)^2+\tilde\eta(X)^2]+1,
  \end{align}
  \begin{align}\label{eq4.5}
    & \sum_{i=2}^n\Psi(e_1,e_i,e_i,e_1)\\\notag
    =& \eta(e_1)^2\sum_{i=2}^n\tilde\eta(e_i)^2+\tilde\eta(e_1)^2\sum_{i=2}^n\eta(e_i)^2-2\eta(e_1)\tilde\eta(e_1)\sum_{i=2}^n\eta(e_i)\tilde\eta(e_i)\\\notag
    %=& \eta(X)^2[\|\tilde\eta^{\top}\|^2-\tilde\eta(X)^2]+\tilde\eta(X)^2[\|\eta^{\top}\|^2-\eta(X)^2]-2\eta(X)\tilde\eta(X)[\sum_{i=1}^n\eta(e_i)\tilde\eta(e_i)-\eta(X)\tilde\eta(X)]\\\notag
     = &\eta(X)^2\|\tilde\eta^{\top}\|^2+\tilde\eta(X)^2\|\eta^{\top}\|^2-2\eta(X)\tilde\eta(X)g(\eta^{\top},\tilde\eta^{\top})\\\notag
     =& \|\eta(X)\tilde\eta^{\top}-\tilde\eta(X)\eta^{\top}\|^2.
  \end{align}
  By substituting \eqref{eq4.3}, \eqref{eq4.4} and \eqref{eq4.5} into \eqref{eq4.2}, we get
  \begin{align}\label{eq4.6}
    \sum_{i=2}^n\bar R(e_1,e_i,e_i,e_1) & =\frac{n-1}{4}a+\frac{1}{8}\{(n-2)[\eta(X)^2+\tilde\eta(X)^2]+1\}b\\\notag
     & +\|\eta(X)\tilde\eta^{\top}-\tilde\eta(X)\eta^{\top}\|^2c.
  \end{align}

  On the other hand, by using \eqref{symmetryofh} we have

  \begin{align}\label{eq4.7}
    \sum_{r=1}^n\sum_{i=2}^n[h^r_{11}h^r_{ii}-(h^r_{1i})^2] & =\sum_{r=1}^n\sum_{i=2}^nh^r_{11}h^r_{ii}-\sum_{i=2}^n(h^i_{11})^2-\sum_{r,i=2}^n(h^1_{ri})^2\\\notag
    & \leq \sum_{r=1}^n\sum_{i=2}^nh^r_{11}h^r_{ii}-\sum_{r=2}^n(h^r_{11})^2-\sum_{i=2}^n(h^1_{ii})^2\\\notag
    & :=f_1(h^1_{11},\cdots,h^1_{nn})+\sum_{r=2}^n f_r(h^r_{11},\cdots,h^r_{nn}),
  \end{align}
  where $f_1,f_r:\mathbb{R}^n\to\mathbb{R},~r=2,\cdots,n$ are quadratic forms defined respectively by
  \begin{align*}
    f_1(h^1_{11},\cdots,h^1_{nn}) & =h^1_{11}\sum_{i=2}^nh^1_{ii}-\sum_{i=2}^n(h^1_{ii})^2,\\
    f_r(h^r_{11},\cdots,h^r_{nn}) & =h^r_{11}\sum_{i=2}^nh^r_{ii}-(h^r_{11})^2.
  \end{align*}
  From Example \ref{ex3.2}, we know that
  \begin{equation}\label{eq4.8}
    f_1(h^1_{11},\cdots,h^1_{nn})\leq\frac{n-1}{4n}(\sum_{i=1}^nh^1_{ii})^2,
  \end{equation}
  with the equality holding if and only if
  \begin{equation}\label{eq4.9}
   \frac{1}{n+1}h^1_{11}=h^1_{22}=\cdots=h^1_{nn}.
  \end{equation}
  From Example \ref{ex3.3}, we know that
  \begin{equation}\label{eq4.10}
    f_r(h^r_{11},\cdots,h^r_{nn})\leq\frac{1}{8}(\sum_{i=1}^nh^r_{ii})^2,~r=2,\cdots,n
  \end{equation}
  with the equality holding if and only if
  \begin{equation}\label{eq4.11}
   3h^r_{11}=h^r_{22}+\cdots+h^r_{nn}.
  \end{equation}
  From \eqref{eqRic}, \eqref{eq4.6}, \eqref{eq4.7}, \eqref{eq4.8}, \eqref{eq4.10}, and noting that $n\geq 2$, we have
  \begin{align}\label{eq4.12}
   Ric(X) & \leq\frac{n-1}{4}a+\frac{1}{8}\{(n-2)[\eta(X)^2+\tilde\eta(X)^2]+1\}b+\|\eta(X)\tilde\eta^{\top}-\tilde\eta(X)\eta^{\top}\|^2c\\\notag
    & +\frac{n-1}{4n}(\sum_{i=1}^nh^1_{ii})^2+\frac{1}{8}\sum_{r=2}^n(\sum_{i=1}^nh^r_{ii})^2\\\notag
    & \leq\frac{n-1}{4}a+\frac{1}{8}\{(n-2)[\eta(X)^2+\tilde\eta(X)^2]+1\}b+\|\eta(X)\tilde\eta^{\top}-\tilde\eta(X)\eta^{\top}\|^2c\\\notag
    & +\frac{(n-1)n}{4}\|H\|^2.\notag
  \end{align}

  Now we consider the equality case. Suppose the equality of \eqref{eq4.12} holds for any unit vector $X$ in $T_pN$. For $n\geq 3$, it follows from \eqref{eq4.12} that
  \begin{equation}\label{eq4.13}
    \sum_{i=1}^nh^r_{ii}=0,~r=2,\cdots,n.
  \end{equation}
  Combining \eqref{eq4.11} and \eqref{eq4.13}, we see that
  \begin{equation*}
    h^r_{11}=0,~r=2,\cdots,n
  \end{equation*}
  which implies that $g(h(X,X),JY)=0$ for all orthogonal unit vectors $X,Y$ in $T_pN$. Thus
  \begin{equation}\label{eq4.14}
    h^r_{ii}=0,~1\leq i\not=r\leq n.
  \end{equation}
  In particular, $h^1_{ii}=0$ for $i>1$. This together with \eqref{eq4.9} gives that $h^1_{11}=0$, which implies that $g(h(X,X),JX)=0$ for any unit vector $X$ in $T_pN$. Thus we have
  \begin{equation}\label{eq4.15}
    h^i_{ii}=0,~i=1,\cdots,n.
  \end{equation}
  Also the equality of \eqref{eq4.7} holds, it follows that
  \begin{equation*}
    h^1_{ij}=0, ~2\leq i\not=j\leq n,
  \end{equation*}
  which implies that $g(h(Y,Z),JX)=0$ for all orthogonal unit vectors $X,Y,Z$ in $T_pN$. Thus we have
  \begin{equation}\label{eq4.16}
    h^r_{ij}=0, \textrm{~for~ distinct}~ r,i,j=1,\cdots, n.
  \end{equation}
  From \eqref{eq4.14}, \eqref{eq4.15}, \eqref{eq4.16} and \eqref{symmetryofh}, we know that $p$ is a totally geodesic point.

  For the case $n=2$, we may choose the unit vector $X$ in $T_pN$ such that $JX$ is parallel to the mean curvature vector $H$. Then $\sum\limits_{i=1}^2h^2_{ii}=0$. Combining this with \eqref{eq4.11}, we see that $h^2_{11}=h^2_{22}=0$. Write $h^2_{12}=\mu$, then \eqref{symmetryofh} gives that $h^1_{22}=\mu$, and $h^1_{12}=h^2_{11}=0$. Write $h^1_{11}=\lambda$, then \eqref{eq4.9} implies that $\lambda=3\mu$. Therefore, $p$ is an $H$-umbilical point with $\lambda=3\mu$.

  The converse can be easily verified.

\end{proof}

%------------------------------------------------------------------------------------%

\section{A lower bound for the Ricci curvature}
\vskip 0.4 true cm

Theorem \ref{thm4.1} shows an upper bound for the Ricci curvature for Lagrangian submanifolds of K$\ddot{\mathrm{a}}$hler QCH-manifolds. In this section we will obtain a lower bound for the Ricci curvature, from which we can generalize Theorem \ref{Theorem1.3}.

\begin{thm}\label{thm5.1}
  Let $N$ be a Lagrangian submanifold of real dimension $n(\geq 3)$ of a K$\ddot{\mathrm{a}}$hler QCH-manifolds $M$. Then for any point $p\in N$ and any unit vector $X\in T_pN$, we have
  \begin{align}\label{eq5.1}
    Ric(X) & \geq -\frac{(n-2)(n-1)(n+1)}{8}a-\frac{n-2}{8}\{n-[\eta(X)^2+\tilde\eta(X)^2]\}b\\\notag
    & -\{(n-1)[\|\eta^{\top}\|^2\|\tilde\eta^{\top}\|^2-g(\eta^{\top},\tilde\eta^{\top})^2]-\|\eta(X)\tilde\eta^{\top}-\tilde\eta(X)\eta^{\top}\|^2\}c\\\notag
    & +(n-1)\tau-\frac{(3n-1)(n-2)n^2}{2(3n+5)}\|H\|^2.\notag
  \end{align}
  The equality holds for any unit tangent vector at $p$ if and only if $p$ is a totally geodesic point.
\end{thm}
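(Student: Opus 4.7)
The plan is to adapt the algebraic method of Theorem~\ref{thm4.1} by bounding $(n-1)\tau-\mathrm{Ric}(X)$ from above. Fix a unit vector $X\in T_pN$ and choose an orthonormal basis $\{e_1,\ldots,e_n\}$ of $T_pN$ with $e_1=X$. Using the Gauss equation~\eqref{Gausseqcomponent} together with~\eqref{eqRic} and~\eqref{scalarcuvature2}, I split $(n-1)\tau(p)-\mathrm{Ric}(X)=B+\sum_{r=1}^{n}T_r$, where $B:=(n-1)\sum_{i<j}\bar R(e_i,e_j,e_j,e_i)-\sum_{i=2}^{n}\bar R(e_1,e_i,e_i,e_1)$ collects the ambient curvature and $T_r:=(n-2)h^r_{11}\sum_{i=2}^{n}h^r_{ii}+(n-1)\sum_{2\le i<j}h^r_{ii}h^r_{jj}-(n-2)\sum_{i=2}^{n}(h^r_{1i})^2-(n-1)\sum_{2\le i<j}(h^r_{ij})^2$ is the contribution of the second fundamental form in direction $Je_r$.

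For the ambient term $B$, I would substitute $\bar R=a\pi+b\Phi+c\Psi$ via Proposition~\ref{QCH-curvature} and compute each contraction exactly as in the proof of Theorem~\ref{thm4.1}: the Lagrangian hypothesis kills every factor $g(J\cdot,\cdot)$ between tangent vectors, the identity $\sum_i[\eta(e_i)^2+\tilde\eta(e_i)^2]=\|\eta\|^2=1$ simplifies the $\Phi$-contractions, and the $\Psi$-contractions collapse to squared norms in $\eta^\top$ and $\tilde\eta^\top$. A direct calculation then shows that $B$ equals the first three terms on the right of~\eqref{eq5.1}; for example the $a$-coefficient $\frac{(n-1)(n-2)(n+1)}{8}$ comes out as $(n-1)\cdot\frac{n(n-1)}{8}-\frac{n-1}{4}$.

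For $\sum_{r}T_r$ I apply Example~\ref{ex3.4} to $T_1$ with $x_i=h^1_{ii}$, and Example~\ref{ex3.5} to each $T_r$ ($r\geq 2$) with $y_i=h^r_{ii}$ and admissible index $s=r$. These bounds contribute $\|H\|^2$-terms $\frac{(n-2)(n-1)n^2}{2(n+1)}(H^1)^2$ and $\frac{(3n-1)(n-2)n^2}{2(3n+5)}(H^r)^2$, respectively. Summing and invoking the total symmetry $h^r_{ij}=h^i_{rj}=h^j_{ri}$ from~\eqref{symmetryofh}, I regroup the positive extras (the $(h^r_{11})^2$ and $(h^r_{ii})^2$ pieces coming from the right-hand sides of the two examples) by the multiset $\{r,i,j\}$ of indices: each one is matched exactly by a symmetric contribution from the negative $(h^r_{1i})^2$ and $(h^r_{ij})^2$ pieces coming from other normal directions, so the net extras reduce to the manifestly non-positive residual $-(3n-5)\sum_{2\leq i<j}(h^1_{ij})^2-3(n-1)\sum_{2\leq a<b<c\leq n}(h^a_{bc})^2$. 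Combined with the strict elementary inequality $\frac{n-1}{n+1}<\frac{3n-1}{3n+5}$ (cross-multiplying gives $(n-1)(3n+5)=3n^2+2n-5<3n^2+2n-1=(n+1)(3n-1)$), this yields $\sum_{r}T_r\leq\frac{(3n-1)(n-2)n^2}{2(3n+5)}\|H\|^2$, which together with $B$ gives~\eqref{eq5.1}.

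For the equality case, the strict coefficient inequality forces $H^1=g(H,JX)=0$ for every $X$, so $H=0$; then equality in Example~\ref{ex3.4} together with $\sum_i h^1_{ii}=nH^1=0$ gives $h^1_{11}=g(h(X,X),JX)=0$ for every unit $X$, and since $h^r_{ij}$ is totally symmetric, the cubic form $X\mapsto g(h(X,X),JX)$ can vanish identically in $X$ only when $h\equiv 0$ at $p$, so $p$ is totally geodesic; the converse is immediate. The main technical obstacle is the multiset-based cancellation in the $\sum_r T_r$ step: the asymmetric choice of algebraic bounds for $r=1$ (Example~\ref{ex3.4}) versus $r\geq 2$ (Example~\ref{ex3.5}) leaves positive cross-terms that can only be absorbed by carefully matching indices across different normal directions using the total symmetry of the Lagrangian second fundamental form.
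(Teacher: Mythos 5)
Your proposal is correct and follows essentially the same route as the paper: the same splitting of $(n-1)\tau-Ric(X)$ into the ambient part and per-normal-direction quadratic contributions, the same regrouping via the total symmetry \eqref{symmetryofh} into the forms of Examples \ref{ex3.4} and \ref{ex3.5} (your residual $-(3n-5)\sum_{2\le i<j}(h^1_{ij})^2-3(n-1)\sum_{a<b<c}(h^a_{bc})^2$ is exactly the paper's dropped terms in \eqref{eq5.8}), and the same strict inequality $\tfrac{n-1}{n+1}<\tfrac{3n-1}{3n+5}$. Your equality analysis via polarization of the totally symmetric cubic form $X\mapsto g(h(X,X),JX)$ is a slightly slicker shortcut than the paper's component-by-component argument, but it is only a minor variation.
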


\begin{proof}
  Let $X$ be a unit vector in $T_pN$. We choose an orthonormal basis $\{e_1,\cdots,e_n\}$ in $T_pN$ such that $e_1=X$. From the equation \eqref{scalarcuvature2} and \eqref{eqRic}, we have
  \begin{align}\label{eq5.2}
  (n-1)\tau-Ric(X) & =(n-1)\sum_{1\leq i<j\leq n}\bar R(e_i,e_j,e_j,e_i)-\sum_{j=2}^n\bar R(e_1,e_j,e_j,e_1)\\\notag
  & +(n-1)\sum_{r=1}^n\sum_{1\leq i<j\leq n}[h^r_{ii}h^r_{jj}-(h^r_{ij})^2]-\sum_{r=1}^n\sum_{j=2}^n[h^r_{11}h^r_{jj}-(h^r_{1j})^2]\notag.
  \end{align}
  It follows from \eqref{Riemanniancurvature} that
  \begin{align}\label{eq5.3}
    \sum_{1\leq i<j\leq n}\bar R(e_i,e_j,e_j,e_i) & = a\sum_{1\leq i<j\leq n}\pi(e_i,e_j,e_j,e_i)+b\sum_{1\leq i<j\leq n}\Phi(e_i,e_j,e_j,e_i)\\\notag
    & +c\sum_{1\leq i<j\leq n}\Psi(e_i,e_j,e_j,e_i).
  \end{align}
  According to \eqref{QCH-pi}, \eqref{QCH-Phi}, \eqref{QCH-Psi}, and noting that $g(e_i,e_j)=\delta_{ij}$, $g(e_i,Je_j)=0$ for $i,j=1,\cdots,n$, by direct calculations, we have
  \begin{equation}\label{eq5.4}
    \sum_{1\leq i<j\leq n}\pi(e_i,e_j,e_j,e_i)=\frac{(n-1)n}{8},
  \end{equation}
   \begin{align}\label{eq5.5}
    \sum_{1\leq i<j\leq n}\Phi(e_i,e_j,e_j,e_i)& =\frac{1}{8}\sum_{1\leq i\not=j\leq n}g(e_i,e_i)[\eta(e_j)^2+\tilde\eta(e_j)^2]\\\notag
    %  & =(n-2)[\eta(e_1)^2+\tilde\eta(e_1)^2]+\sum_{i=1}^n[\eta(e_i)^2+\tilde\eta(e_i)^2]\\\notag
    & =\frac{1}{8}\sum_{1\leq i\not=j\leq n}[\eta(e_j)^2+\eta(Je_j)^2]\\\notag
    %& =\frac{n-1}{8}\sum_{j=1}^n[\eta(e_j)^2+\eta(Je_j)^2]\\\notag
    & =\frac{n-1}{8},\notag
  \end{align}
  \begin{align}\label{eq5.6}
     \sum_{1\leq i<j\leq n}\Psi(e_i,e_j,e_j,e_i) & =\sum_{1\leq i\not=j\leq n}[\eta(e_j)^2\tilde\eta(e_i)^2-\eta(e_i)\eta(e_j)\tilde\eta(e_i)\tilde\eta(e_j)]\\\notag
    &= \|\eta^{\top}\|^2\|\tilde\eta^{\top}\|^2-g(\eta^{\top},\tilde\eta^{\top})^2.\\\notag
  \end{align}
  By substituting \eqref{eq5.4}, \eqref{eq5.5} and \eqref{eq5.6} into \eqref{eq5.3}, we get
  \begin{equation}\label{eq5.7}
    \sum_{1\leq i<j\leq n}\bar R(e_i,e_j,e_j,e_i) =\frac{(n-1)n}{8}a+\frac{n-1}{8}b+[\|\eta^{\top}\|^2\|\tilde\eta^{\top}\|^2-g(\eta^{\top},\tilde\eta^{\top})^2]c.
  \end{equation}

  On the other hand, by using \eqref{symmetryofh} we calculate that
  \begin{align}\label{eq5.8}
    & (n-1)\sum_{r=1}^n\sum_{1\leq i<j\leq n}[h^r_{ii}h^r_{jj}-(h^r_{ij})^2]-\sum_{r=1}^n\sum_{j=2}^n[h^r_{11}h^r_{jj}-(h^r_{1j})^2]\\\notag
   =& \frac{n-1}{2}\sum_{r=1}^n\sum_{1\leq i\not=j\leq n}h^r_{ii}h^r_{jj}-\sum_{r=1}^n\sum_{j=2}^nh^r_{11}h^r_{jj}-\frac{n-1}{2}\sum_{r=1}^n\sum_{1\leq i\not=j\leq n}(h^r_{ij})^2+\sum_{r=1}^n\sum_{j=2}^n(h^r_{1j})^2\\\notag
   =& \frac{n-1}{2}\sum_{r=1}^n\sum_{1\leq i\not=j\leq n}h^r_{ii}h^r_{jj}-\sum_{r=1}^n\sum_{j=2}^nh^r_{11}h^r_{jj}-(n-1)\sum_{1\leq i\not=r\leq n}(h^i_{rr})^2\\\notag
   - & \frac{n-1}{2}\sum_{\substack{r,i,j~ {\mathrm distinct}\\ 1\leq r,i,j\leq n}}(h^r_{ij})^2+\sum_{j=2}^n(h^j_{11})^2+\sum_{j=2}^n(h^1_{jj})^2+\sum_{2\leq i\not=j\leq n}(h^1_{ij})^2\\\notag
   =& \frac{n-1}{2}\sum_{r=1}^n\sum_{1\leq i\not=j\leq n}h^r_{ii}h^r_{jj}-\sum_{r=1}^n\sum_{j=2}^nh^r_{11}h^r_{jj}-(n-1)\sum_{1\leq i\not=r\leq n}(h^i_{rr})^2+\sum_{j=2}^n(h^j_{11})^2\\\notag
   + & \sum_{j=2}^n(h^1_{jj})^2-\frac{3n-5}{2}\sum_{2\leq i\not=j\leq n}(h^1_{ij})^2- \frac{n-1}{2}\sum_{\substack{r,i,j~ {\mathrm distinct}\\ 2\leq r,i,j\leq n}}(h^r_{ij})^2\\\notag
   \end{align}
   \begin{align*}
   \leq & \frac{n-1}{2}\sum_{r=1}^n\sum_{1\leq i\not=j\leq n}h^r_{ii}h^r_{jj}-\sum_{r=1}^n\sum_{j=2}^nh^r_{11}h^r_{jj}-(n-1)\sum_{1\leq i\not=r\leq n}(h^i_{rr})^2\\\notag
   + & \sum_{j=2}^n(h^j_{11})^2+\sum_{j=2}^n(h^1_{jj})^2.\\\notag
  \end{align*}
  Substituting \eqref{eq4.6}, \eqref{eq5.7} and \eqref{eq5.8} into \eqref{eq5.2}, we get
  \begin{align}\label{eq5.9}
    (n-1)\tau-Ric(X) &\leq \frac{(n-2)(n-1)(n+1)}{8}a+\frac{n-2}{8}\{n-[\eta(X)^2+\tilde\eta(X)^2]\}b\\\notag
    & +\{(n-1)[\|\eta^{\top}\|^2\|\tilde\eta^{\top}\|^2-g(\eta^{\top},\tilde\eta^{\top})^2]-\|\eta(X)\tilde\eta^{\top}-\tilde\eta(X)\eta^{\top}\|^2\}c\\\notag
    & +\frac{n-1}{2}\sum_{r=1}^n\sum_{1\leq i\not=j\leq n}h^r_{ii}h^r_{jj}-\sum_{r=1}^n\sum_{j=2}^nh^r_{11}h^r_{jj}-(n-1)\sum_{1\leq r\not=i\leq n}(h^r_{ii})^2\\\notag
    & +\sum_{r=2}^n(h^r_{11})^2+\sum_{j=2}^n(h^1_{jj})^2.\notag
  \end{align}

Consider the quadratic forms $f_1,f_r:\mathbb{R}^n\to\mathbb{R},~r=2,\cdots,n$ defined respectively by
  \begin{align}\label{eq5.10}
    & f_1(h^1_{11},\cdots,h^1_{nn})\\\notag  =&\frac{n-1}{2}\sum_{1\leq i\not=j\leq n}h^1_{ii}h^1_{jj}-\sum_{j=2}^nh^1_{11}h^1_{jj}-(n-1)\sum_{i=2}^n(h^1_{ii})^2+\sum_{j=2}^n(h^1_{jj})^2\\\notag
   =&-(n-2)\sum_{i=2}^n(h^1_{ii})^2+(n-2)\sum_{j=2}^nh^1_{11}h^1_{jj}+(n-1)\sum_{2\leq i<j\leq n}h^1_{ii}h^1_{jj},\notag
  \end{align}
   \begin{align}\label{eq5.11}
    & f_r(h^r_{11},\cdots,h^r_{nn})\\\notag  =&\frac{n-1}{2}\sum_{1\leq i\not=j\leq n}h^r_{ii}h^r_{jj}-\sum_{j=2}^nh^r_{11}h^r_{jj}-(n-1)\sum_{1\leq i(\not=r)\leq n}(h^r_{ii})^2+(h^r_{11})^2\\\notag
     =&(n-2)\sum_{j=2}^nh^r_{11}h^r_{jj}+(n-1)\sum_{2\leq i<j\leq n}h^r_{ii}h^r_{jj}-(n-2)(h^r_{11})^2-(n-1)\sum_{1\leq i(\not=1,r)\leq n}(h^r_{ii})^2.\\\notag
     %+&.\\\notag
  \end{align}
  From Example \ref{ex3.4}, we know that
  \begin{equation}\label{eq5.12}
    f_1(h^1_{11},\cdots,h^1_{nn})\leq\frac{(n-2)(n-1)}{2(n+1)}(\sum_{i=1}^nh^1_{ii})^2,
  \end{equation}
  with the equality holding if and only if
  \begin{equation}\label{eq5.13}
   h^1_{22}=\cdots=h^1_{nn}=\frac{1}{2}h^1_{11}.
  \end{equation}
  From Example \ref{ex3.5}, we know that
  \begin{equation}\label{eq5.14}
    f_r(h^r_{11},\cdots,h^r_{nn})\leq\frac{(n-2)(3n-1)}{2(3n+5)}(\sum_{i=1}^nh^r_{ii})^2,~r=2,\cdots,n
  \end{equation}
  with the equality holding if and only if
  \begin{equation}\label{eq5.15}
   h^r_{22}=\cdots=h^r_{r-1r-1}=h^r_{r+1r+1}=\cdots=h^r_{nn}=\frac{3}{2}h^r_{11},~ h^r_{rr}=\frac{9}{2}h^r_{11}.
  \end{equation}
  From \eqref{eq5.9}, \eqref{eq5.10}, \eqref{eq5.11},  \eqref{eq5.12}and \eqref{eq5.14}, we obtain that
  \begin{align}\label{eq5.16}
    (n-1)\tau-Ric(X)\leq & \frac{(n-2)(n-1)(n+1)}{8}a+\frac{n-2}{8}\{n-[\eta(X)^2+\tilde\eta(X)^2]\}b\\\notag
     +& \{(n-1)[\|\eta^{\top}\|^2\|\tilde\eta^{\top}\|^2-g(\eta^{\top},\tilde\eta^{\top})^2]-\|\eta(X)\tilde\eta^{\top}-\tilde\eta(X)\eta^{\top}\|^2\}c\\\notag
     +& \frac{(n-2)(n-1)}{2(n+1)}(\sum_{i=1}^nh^1_{ii})^2+\frac{(n-2)(3n-1)}{2(3n+5)}\sum_{r=2}^n(\sum_{i=1}^nh^r_{ii})^2.\\\notag
  \end{align}
  Noting that $n\geq 3$, $\frac{(n-2)(n-1)}{n+1}<\frac{(n-2)(3n-1)}{3n+5}$, \eqref{eq5.16} gives \eqref{eq5.1} immediately.
  %\begin{align}\label{eq5.0}
%    Ric(X) & \geq -\frac{(n-2)(n-1)(n+1)}{8}a-\frac{n-2}{8}\{n-[\eta(X)^2+\tilde\eta(X)^2]\}b\\\notag
%    & -\{(n-1)[\|\eta^{\top}\|^2\|\tilde\eta^{\top}\|^2-g(\eta^{\top},\tilde\eta^{\top})^2]-\|\eta(X)\tilde\eta^{\top}+\tilde\eta(X)\eta^{\top}\|^2\}c\\\notag
%    & +(n-1)\tau-\frac{(3n-1)(n-2)n^2}{2(3n+5)}\|H\|^2.\notag
%  \end{align\}

   Now we consider the equality case. Suppose the equality of \eqref{eq5.1} holds for any unit vector $X$ in $T_pN$. It follows from \eqref{eq5.16} and \eqref{eq5.1} that
  \begin{equation}\label{eq5.17}
    \sum_{i=1}^nh^1_{ii}=0.
  \end{equation}
  Combining this with \eqref{eq5.13}, we see that
  \begin{equation*}
    h^1_{11}=\cdots=h^1_{nn}=0,
  \end{equation*}
  wihich implies that $g(h(X,X),JX)=0$ for any unit vector $X\in T_pN$, and $g(h(Y,Y),JX)=0$ for all orthogonal unit vectors $X,Y$ in $T_pN$. Thus
  \begin{equation}\label{eq5.18}
    h^r_{ii}=0,~r,i=1,\cdots,n.
  \end{equation}
  Also the equality of \eqref{eq5.8} holds, it gives that
  \begin{equation}\label{eq5.19}
    h^1_{ij}=0,~2\leq i\not=j\leq n,
  \end{equation}
  \begin{equation}\label{eq5.20}
    h^r_{ij}=0,~\textrm{for~distinct}~r,i,j=2,\cdots,n.
  \end{equation}
  Combining \eqref{eq5.18} and \eqref{symmetryofh} we see that
  \begin{equation}\label{eq5.21}
    h^1_{1i}=0,~i=1,\cdots,n,
  \end{equation}
  \begin{equation}\label{eq5.22}
    h^r_{rj}=0,~ j=1,\cdots,n,~ r=2,\cdots,n.
  \end{equation}
%  \begin{equation}\label{eq5.23}
%    h^r_{1j}=0,~ j=1,\cdots,n,~r=2,\cdots,n,
%  \end{equation}
  From \eqref{eq5.18}-\eqref{eq5.22} we see that $h^r_{ij}=0$ for $i,j,r=1,\cdots,n$, which implies thah $p$ is a totally geodesic point.

  The converse can be easily verified.
\end{proof}

Theorem \ref{thm5.1} immediately gives an inequality related to T.Oprea's invariant $\delta_n(M)$ defined by \eqref{01.1} as follows.

\begin{thm}\label{Theorem5.2}
  Let $N$ be a Lagrangian submanifold of real dimension $n(\geq 3)$ of a K$\ddot{\mathrm{a}}$hler QCH-manifolds $M$. Then
  \begin{align*}
    \delta_n(M) & \leq \frac{(n-2)(n+1)}{8}a+\frac{n-2}{8(n-1)}\{n-[\eta(X)^2+\tilde\eta(X)^2]\}b\\
    & +\big[\|\eta^{\top}\|^2\|\tilde\eta^{\top}\|^2-g(\eta^{\top},\tilde\eta^{\top})^2-\frac{1}{n-1}\|\eta(X)\tilde\eta^{\top}+\tilde\eta(X)\eta^{\top}\|^2\big]c\\
    & +\frac{(3n-1)(n-2)n^2}{2(n-1)(3n+5)}\|H\|^2.
  \end{align*}
\end{thm}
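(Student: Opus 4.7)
The plan is to derive Theorem~\ref{Theorem5.2} as an immediate corollary of Theorem~\ref{thm5.1} by purely algebraic rearrangement, no new geometric input being required. By definition \eqref{01.1}, when $k=n$ the only admissible subspace $L$ is $T_pN$ itself, so the invariant simplifies to
\[
\delta_n(M)\;=\;\tau-\frac{1}{n-1}\,\min_{X\in T_pN,\ \|X\|=1} Ric(X).
\]
Since the unit sphere in $T_pN$ is compact and $Ric$ is continuous on it, the minimum is attained at some unit vector $X_{0}\in T_{p}N$, giving $\delta_n(M)=\tau-\frac{1}{n-1}Ric(X_{0})$. The strategy is then to apply Theorem~\ref{thm5.1} at $X=X_{0}$ and convert the resulting lower bound on $Ric(X_{0})$ into an upper bound on $\delta_n(M)$.

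Concretely, moving $Ric(X_{0})$ to the left and $(n-1)\tau$ to the right in the inequality of Theorem~\ref{thm5.1} produces an upper estimate on $(n-1)\tau-Ric(X_{0})$. Dividing this estimate through by $n-1$ turns the left-hand side into $\tau-\tfrac{1}{n-1}Ric(X_{0})=\delta_n(M)$, while each term on the right-hand side simply acquires a factor of $\tfrac{1}{n-1}$: the $a$-coefficient $\tfrac{(n-2)(n-1)(n+1)}{8}$ becomes $\tfrac{(n-2)(n+1)}{8}$, the $b$-coefficient $\tfrac{n-2}{8}$ becomes $\tfrac{n-2}{8(n-1)}$, the $c$-bracket $\{(n-1)[\|\eta^{\top}\|^{2}\|\tilde\eta^{\top}\|^{2}-g(\eta^{\top},\tilde\eta^{\top})^{2}]-\|\eta(X_{0})\tilde\eta^{\top}-\tilde\eta(X_{0})\eta^{\top}\|^{2}\}$ becomes $\|\eta^{\top}\|^{2}\|\tilde\eta^{\top}\|^{2}-g(\eta^{\top},\tilde\eta^{\top})^{2}-\tfrac{1}{n-1}\|\eta(X_{0})\tilde\eta^{\top}-\tilde\eta(X_{0})\eta^{\top}\|^{2}$, and the mean curvature coefficient $\tfrac{(3n-1)(n-2)n^{2}}{2(3n+5)}$ becomes $\tfrac{(3n-1)(n-2)n^{2}}{2(n-1)(3n+5)}$. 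Matching these against the claimed right-hand side of Theorem~\ref{Theorem5.2} completes the proof, with the understanding that the vector $X$ appearing in the statement is precisely the minimizing direction $X_{0}$.

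There is in effect no main obstacle: the entire argument is Theorem~\ref{thm5.1} plus the observation that, because $\dim L=n$ collapses the minimum in \eqref{01.1} to a minimum over all unit tangent vectors of $N$, the lower bound for $Ric(X)$ proved in the previous section translates term-for-term into an upper bound for $\delta_n(M)$. The only mild point to flag is bookkeeping of the $c$-coefficient, where one must be careful to split the factor $(n-1)$ correctly between the two pieces of the $\Psi$-type contribution; once this split is performed, the inequality takes exactly the stated form.
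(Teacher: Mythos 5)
Your proposal is correct and is exactly the paper's argument: the paper presents Theorem~\ref{Theorem5.2} as an immediate corollary of Theorem~\ref{thm5.1}, obtained by the same rearrangement of $(n-1)\tau-Ric(X)$ and division by $n-1$, applied at the unit vector minimizing the Ricci curvature. Note only that your (correct) computation yields $\|\eta(X)\tilde\eta^{\top}-\tilde\eta(X)\eta^{\top}\|^2$ in the $c$-term, whereas the printed statement has a $+$ inside that norm --- an apparent typo in the paper rather than a gap in your argument.
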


\begin{rmk}
  For $b=0,c=0$, Theorem \ref{Theorem5.2} is due to Theorem \ref{Theorem1.3}.
\end{rmk}

%\vskip 0.4 true cm
%
%{\bf Acknowledgements} We would like to thank Professors Teodor Oprea, Jaewon Lee and Mukut Mani Tripathi for the disscussion held on this topic. More over, we are thankful to the referees for their valuable comments and suggestions which improved the paper.
%\vskip 0.4 true cm

\vskip 0.5 true cm

%-----------------------------------------------------------------------------
%-----------------------------------------------------------------------------

\bigskip
\bigskip

\noindent {\footnotesize {\it L. Zhang} \\
{School of Mathematics and Computer Science, Anhui Normal University}\\
{Anhui 241000, P.R. China}\\
{Email: zhliang43@ahnu.edu.cn}

\vskip 0.5 true cm

\noindent {\footnotesize {\it X. Liu} \\
{School of Mathematics and Computer Science, Anhui Normal University}\\
{Anhui 241000, P.R. China}\\
{Email: 2287354429@qq.com}

\vskip 0.5 true cm

\noindent {\footnotesize {\it D. Cai} \\
{School of Mathematics and Computer Science, Anhui Normal University}\\
{Anhui 230026, P.R. China}\\
{Email: 2561408739@qq.com}

\end{document}